\newtheorem{theorem}{Theorem}[section]
\newtheorem{lemma}[theorem]{Lemma}
\newtheorem{slemma}[theorem]{Sublemma}
\newtheorem{proposition}[theorem]{Proposition}
\newtheorem{corollary}[theorem]{Corollary}
\theoremstyle{definition}
\newtheorem{definition}[theorem]{Definition}
\newtheorem{remark}[theorem]{Remark}
\numberwithin{equation}{section}
\newskip\aline \newskip\halfaline
\def\skipaline{\vskip\aline}
\def\qedbox{$\rlap{$\sqcap$}\sqcup$}
\def\qed{\nobreak\hfill\penalty250 \hbox{}\nobreak\hfill\qedbox\skipaline}
\def\proofend{\eqno{\mbox{\qedbox}}}
\newcommand{\one}{\mathbbm{1}}
\newcommand\bC{{\mathbb C}}
\newcommand\bR{{\mathbb R}}
\newcommand\bZ{{\mathbb Z}}
\DeclareMathOperator{\uso}{\underline{\mathit{so}}}
\DeclareMathOperator{\Gr}{\mathbf{Gr}}
\DeclareMathOperator{\diag}{Diag} \DeclareMathOperator{\Hom}{Hom}
 \DeclareMathOperator{\End}{End}
\DeclareMathOperator{\spa}{span}
\DeclareMathOperator{\ev}{\mathbf{ev}}
\DeclareMathOperator{\pf}{\mathbf{Pf}}
\DeclareMathOperator{\var}{\boldsymbol{var}}
\DeclareMathOperator{\spec}{spec}
\newcommand{\be}{{\boldsymbol{e}}}
\newcommand{\ii}{\boldsymbol{i}}
\newcommand{\bu}{{\boldsymbol{u}}}
\newcommand{\bv}{{\boldsymbol{v}}}
\newcommand{\bx}{{\boldsymbol{x}}}
\newcommand{\by}{{\boldsymbol{y}}}
\newcommand{\bsC}{\boldsymbol{C}}
\newcommand{\bsD}{\boldsymbol{D}}
\newcommand{\bsE}{{\boldsymbol{E}}}
\newcommand{\bsK}{{\boldsymbol{K}}}
\newcommand{\bsP}{\boldsymbol{P}}
\newcommand{\bsU}{{\boldsymbol{U}}}
\newcommand{\bsV}{\boldsymbol{V}}
\newcommand{\bgamma}{\boldsymbol{\gamma}}
\newcommand{\bchi}{\boldsymbol{\chi}}
\newcommand{\bsi}{\boldsymbol{\sigma}}
\newcommand{\si}{{\sigma}}
\newcommand{\ve}{{\varepsilon}}
\newcommand{\eps}{{\epsilon}}
\newcommand{\vfi}{{\varphi}}
\newcommand{\fa}{\mathfrak{a}}
\newcommand{\eH}{\EuScript H}
\newcommand{\eK}{\EuScript{K}}
\newcommand{\eL}{\EuScript{L}}
\newcommand{\eO}{\EuScript{O}}
\newcommand{\eS}{\EuScript{S}}
\newcommand{\eT}{\EuScript{T}}
\newcommand{\eX}{\EuScript{X}}
\newcommand{\ra}{\rightarrow}
\newcommand{\Llra}{{\Longleftrightarrow}}
\newcommand{\lan}{\langle}
\newcommand{\ran}{\rangle}
\def\inpr{\mathbin{\hbox to 6pt{\vrule height0.4pt width5pt depth0pt \kern-.4pt \vrule height6pt width0.4pt depth0pt\hss}}}
\newcommand{\pa}{\partial}
\newcommand{\nah}{\widehat{\nabla}}
\newcommand{\Ra}{\Rightarrow}
\begin{document}

\title[Gauss-Bonnet-Chern Theorem]{The Gauss-Bonnet-Chern theorem: a probabilistic perspective} 

% This is the American J. of Math Version

\date{Started  March 5, 2014. Completed  on March 23, 2014. Last modified on {\today}. }

\author{Liviu I. Nicolaescu}
%\thanks{This work was partially supported by the NSF grant, DMS-1005745.}

\address{Department of Mathematics, University of Notre Dame, Notre Dame, IN 46556-4618.}
\email{nicolaescu.1@nd.edu}
\urladdr{\url{http://www.nd.edu/~lnicolae/}}
\author{Nikhil Savale}
\address{Department of Mathematics, University of Notre Dame, Notre Dame, IN 46556-4618.}
\email{nsavale@nd.edu}
\subjclass{Primary      35P20,  53C65, 58J35,  58J40, 58J50, 60D05}
\keywords{Gauss-Bonnet-Chern theorem,  currents, random sections, Gaussian measures, connections, curvature, Euler form,  Laplacian,  wave kernel asymptotics, heat kernel asymptotics, }

\begin{abstract} We prove that the Euler form of a metric connection on  a real oriented vector bundle  $E$  over a compact oriented manifold $M$   can be identified, as a  current, with the expectation of the random current defined by the  zero-locus of a certain random section of the bundle. We also  explain  how to reconstruct probabilistically  the  metric and the connection on $E$ from the statistics  of random sections of $E$.
\end{abstract}

\maketitle

\tableofcontents

\section{Introduction}
\setcounter{equation}{0}

 \subsection{The Gauss-Bonnet-Chern theorem} We begin by recalling the classical  Gauss-Bonnet-Chern theorem \cite{Ch, N1, Spi5}. Suppose that $E\to M$ is a real \emph{oriented}  vector bundle of even rank $r=2h$ over the smooth, compact oriented manifold $M$ of dimension $m$.  Fix a metric $(-,-)_E$ on $E$ and a connection   $\nabla^E$ compatible with the metric. We denote by $F^E$ the curvature of the connection $\nabla^E$ on $E$. The \emph{Euler form} of $(E,\nabla^E)$ is the   closed form
\begin{equation}
\be(E,\nabla^E):=\frac{1}{ (2\pi)^{h} } \pf\bigl(-F^E\bigr)\in\Omega^r(M),\;\;r=2h,
\label{euler}
\end{equation}
where $\pf$  denotes the Pfaffian construction, \cite{BZ,MQ, N1}.    For the applications we have in mind  it is important to have  an explicit  local description $\pf\bigl(-F^E\bigr)$.  

If we fix a local, \emph{positively oriented}  orthonormal frame $\be_1,\dotsc,\be_r$ of $E$ defined on some open set $\eO\subset M$, then the curvature $F^E$ is represented by a skew-symmetric  $r\times r$ matrix 
\[
F^E=(F_{\alpha\beta}^E)_{1\leq \alpha,\beta\leq r},\;\;F_{\alpha\beta}\in \Omega^2(\eO).
\]
If we denote by $\eS_r$ the group of permutations of $\{1,\dotsc, r=2h\}$, then (see \cite[\S 8.1.4]{N1})
\begin{equation}
\pf\bigl(-F^E\bigr)=\frac{1}{2^h h!}\sum_{\si\in\eS_r}\eps(\si) F^E_{\si_1\si_2}\wedge \cdots \wedge F^E_{\si_{2h-1}\si_{2h}}\in\Omega^{2h}(\eO),
\label{pf1}
\end{equation}
where $\eps(\si)$ denotes the signature of the permutation $\si\in\eS_r$.

Suppose additionally that  we have  local coordinates $(x^1,\dotsc, x^m)$ on $\eO$.  For $1\leq \alpha_1,\alpha_2\leq r$ and $1\leq j_1,j_2\leq m$ we set
\begin{equation}
F^E_{\alpha_1\alpha_2|j_1j_2}:= F^E_{i_1i_2}(\pa_{x^{j_1}},\pa_{x^{j_2}}).
\label{not}
\end{equation}
Denote  by $\eS_r'$ the subset of $\eS_r$ consisting  of permutations  $(\si_1,\dotsc,\si_{2h})$ such that
\[
\si_1<\si_2,\;\si_3<\si_4,\;\dotsc ,\;\si_{2h-1}<\si_{2h}.
\]
We deduce from (\ref{pf1}) that 
\begin{equation}
\pf\bigl(-F^E\bigr)\bigl(\,\pa_{x^1},\cdots,\pa_{x^r}\,\bigr)= \frac{1}{h!}\sum_{\vfi,\si\in\eS_r'} \eps(\si\vfi)F^E_{\si_1\si_2|\vfi_1\vfi_2}\cdots F^E_{\si_{2h-1}\si_{2h}|\vfi_{2h-1}\vfi_{2h}}.
\label{pf2}
\end{equation}

We denote by $\Omega_k(M)$ the space of $k$-dimensional currents on $M$, i.e., the topological  dual of  the space $\Omega^k(M)$  of smooth $k$-forms on $M$.  By definition, we have a pairing
\[
\lan-,-\ran: \Omega^k(M)\times \Omega_k(M)\to \bR, \;\;(\eta,C)\mapsto \lan\eta,C\ran.
\]
The orientation  of $M$  defines a   natural Poincar\'{e} duality map
 \[
 \Omega^{m-k}(M)\ni \omega\mapsto \omega^\dag\in \Omega_k(M),\;\;\lan\eta,\omega^\dag\ran :=\int_M \eta \wedge \omega,\;\;\forall \eta\in \Omega^{k}(M).
 \]
Given $\omega\in\Omega^{m-k}(M)$  we will refer to $\omega^\dag\in \Omega_k(M)$ as the \emph{current determined by the form} $\omega$.  By duality  we obtain  a boundary map
 \[
 \pa :\Omega_k(M)\to\Omega_{k-1}(M),\;\;\lan \eta,\pa C\ran:=\lan d\eta , C\ran,\;\;\forall C\in \Omega_{k}(M),\;\;\eta \in \Omega^{k-1}(M).
 \]
 A current $C$ is called closed if $\pa C=0$. 
 
 A generic section $\bu$ of $E$  is transversal to the zero section, $u\pitchfork 0$,  and its zero locus  is a smooth  submanifold   $Z_\bu\subset M$ of dimension $m-r$ equipped with a natural orientation.   The integration along this oriented submanifold  defines a  closed current $[Z_\bu]\in \Omega_{m-r}(M)$.
 
 The   Gauss-Bonnet-Chern theorem states  that,  for a generic section $\bu$,  the  $(m-r)$-dimensional closed currents $[Z_\bu]$ and the Poincar\'{e} dual  $\be(E,\nabla^E)^\dag$ are homologous, i.e.,
 \begin{equation}
\forall \bu\in C^\infty(E):\;\;\bu\pitchfork 0\Ra  [Z_\bu]-\be(E,\nabla^E)\in \pa \Omega_{m-r-1}(M).
 \label{GBC0}
 \end{equation}
 In view of DeRham's theorem \cite[\S 22, Thm. $17'$]{DR}, this is equivalent with the statement 
 \begin{equation}
\forall \bu\in C^\infty(E),\;\;\bu\pitchfork 0\Ra  \lan \eta , [Z_\bu]\ran =\int_M\eta \wedge \be(E,\nabla^E),\;\;\forall \eta\in\Omega^r(M),\;\;d\eta=0.
 \label{GBC1}
 \end{equation}
 
 \begin{remark} There exist more refined versions of   (\ref{GBC0})   which explicitly describe   locally integrable forms  $T=T(\bu,\nabla^E)$ such that we have the  equality of currents
 \[
  [Z_\bu]-\be(E,\nabla^E)=d T(\bu,\nabla^E).
  \]
  For details we refer to \cite{BZ, HL, MQ}. \qed
 \end{remark}
 
 \subsection{Overview of the paper} The first goal of this paper is to provide  a probabilistic proof and a refinement  of  (\ref{GBC1}).   Let us first  observe that if $\bu,\bv$ are two generic smooth sections of $E$, then the corresponding currents are homologous, i.e., 
 \[
 [Z_\bu]-[Z_\bv]\in\pa\Omega_{m-r-1}(M)\iff \lan\eta,[Z_\bu]\ran=\lan\eta,[Z_\bv]\ran,\;\;\forall \eta\in \Omega^{m-r}(M),\;\;d\eta=0 .
 \]
This shows that  if $\bu_1,\dotsc,\bu_n$ are generic sections of $E$ and $ p_1,\dotsc, p_n$ are positive weights  such that $p_1+\cdots +p_n=1$, then the average
 \[
 p_1[Z_{\bu_1}] +\cdots +p_n[Z_{\bu_n}]
 \]
 is a closed current homologous to each of the currents $[Z_{\bu_k}]$. 
 
 More generally, if $\bsP$ is a probability measure on $C^\infty(E)$ such that $\bsP$-almost surely a section $\bu$ intersects the zero section transversally, then the  expected  current 
 \[
 \bsE_{\bsP}([Z_\bu]):=\int [Z_\bu] \bsP(d\bu)
 \]
  is a current \emph{homologous to} the current defined by the zero locus of any  generic section $\bu_0$, i.e.,
  \begin{equation}  
  \int \lan \eta,[Z_\bu]\ran  \bsP(d\bu)=\lan\eta, [Z_{\bu_0}]\ran,\;\;\forall \eta\in \Omega^{m-r}(M),\;\;d\eta=0.
  \label{avcurr}
  \end{equation}
 An \emph{ensemble} of sections of $E$ is a pair $(\bsU,\bsP)$, where $\bsU\subset C^\infty(E)$ is a finite dimensional space and $\bsP$ is a probability measure on $\bsU$.  The first  main result of this paper   shows that  there exists a large supply of ensembles $(\bsU,\bsP)$  such that 
  
  \begin{itemize} 
  
  \item a section $\bu\in \bsU$ is $\bsP$-almost surely transversal to the zero section, and
  
  \item there exist  a metric $(-,-)_E$ and a connection $\nabla^E$, compatible  with $(-,-)_E$ such that  the expected current $\bsE_{\bsP}([Z_\bu])$ is \emph{equal to} the current determined by   the Euler form  $\be(E,\nabla^E)$, i.e.,  
    \[
 \bigl\lan\,\eta\, ,\, \bsE_{\bsP}([Z_\bu])\,\bigr\ran=\int_{\bsU}\lan \eta, [Z_\bu]\ran \bsP(d{\bu})=\int_M \eta \wedge \be(E,\nabla^E),\;\;\forall \eta\in \Omega^{m-r}(M).
 \]
 \end{itemize}
 
 We will  refer to an ensemble    $(\bsU,\bsP)$ with the above properties  as \emph{adapted to the metric $(-,-)_E$ and the connection  $\nabla^E$}. In the sequel, we will refer to a pair    consisting of a metric on a vector bundle and a connection  compatible with it as  a \emph{(metric,connection)-pair}. The first   step in our program is to produce  a large supply of examples of (metric,connection)-pairs   for which we can \emph{explicitly} construct  adapted  ensembles $(\bsU,\bsP)$.    

Fix a  finite dimensional real  vector space $\bsU$ equipped with  a Euclidean   inner product $(-,-)_\bsU$.   We form the  trivial real vector bundle
\[
\underline{\bsU}_M:= \bsU\times M\to M.
\]
Assume that $E\to M$ is   an oriented subbundle  of rank $r$ of $\underline{\bsU}_M$. The metric $(-,-)_\bsU$ on $\bsU$ induces   a metric $(-,-)_E$ on $E$.  For each $\bx\in M$ we denote by $P_\bx$ the orthogonal projection   $\bsU\to E_\bx$.  The trivial connection $d$ on $\underline{\bsU}_M$ induces a connection $\nabla^E=Pd$ on $E$.   We will  call \emph{special} a (metric, connection)-pair $\bigl(\; (-,-)_E,\nabla^E\,\bigr)$ constructed  as above,  via an embedding of $E$ in a trivial  vector bundle equipped with a trivial metric and the trivial connection.

Any $\bu\in\bsU$ defines a section $S_\bu^E$ of $E$ given by
\[
S_\bu^E(\bx)=P_\bx u,\;\;\forall \bx\in M.
\]
We thus get a linear map $S^E:\bsU\to C^\infty(E)$, $\bu\mapsto S_\bu^E$, whose range is the finite dimensional space 
\[
\widehat{\bsU}:=\bigl\{ \, S_\bu^E;\;\;\bu\in\bsU\,\bigr\}\subset C^\infty(E).
\]
The metric  on $\bsU$ induces a Gaussian probability measure  on ${\bsU}$; see  (\ref{met_gauss}). Its pushforward by $S^E$ is  a Gaussian probability  measure  $\gamma_{\bsU}$ on $\widehat{\bsU}\subset C^\infty(E)$.

   Theorem \ref{th: main_int}(i) shows that,    $\gamma_{\bsU}$-almost surely,   a  section   $\hat{\bu}\in\widehat{\bsU}$ intersects transversally the zero section of $E$. We denote by $[Z_{\hat{\bu}}]$  the current of integration  defined by zero locus of $\hat{\bu}$.   

The key integral formula (\ref{intfor}) in  Theorem \ref{th: main_int} shows that the expectation of the random current $[Z_{\hat{\bu}}]$ is equal  to   the current determined by $\be(E,\nabla^E)$, i.e., 
\begin{equation}
\bigl\lan\,\eta\, ,\, \bsE_{\gamma_\bsU}([Z_{\hat{\bu}}])\,\bigr\ran=\int_{\widehat{\bsU}}\lan \eta, [Z_{\hat{\bu}}]\ran \gamma_\bsU(d\hat{\bu})=\int_M \eta \wedge \be(E,\nabla^E),\;\;\forall \eta\in \Omega^{m-r}(M).
\label{BGC2}
\end{equation}
In other words, the ensemble $(\widehat{\bsU},\bgamma_\bsU)$ is adapted to the special pair $(\,(-,-)_E,\nabla^E)$. 

\begin{remark} As explained in \cite{BZ, HL, MQ}, there are many natural ways to \emph{explicitly} associate to each section $\bu\in C^\infty(E)$,  and any connection $\nabla$ on $E$ compatible with $(-,-)_E$,   a locally integrable form  $T(\bu,\nabla)$  of degree $(2h-1)$ on $M$ satisfying  the equality of currents 
\begin{equation}
[Z_\bu]-\be(E, \nabla)=d T(\bu,\nabla).
\label{PL}
\end{equation}
Such an equality generalizes   the Poincar\'{e}-Lelong formula in  complex analysis and it clearly implies (\ref{GBC0}).

 Let $\widehat{\bsU}$, $\bgamma_\bsU$, $(-,-)_E$ and $\nabla^E$ be as  in (\ref{BGC2}). Averaging (\ref{PL})  over $\bu\in\widehat{\bsU}$ with respect to the measure $\bgamma_\bsU$ we deduce from (\ref{BGC2}) that
\[
\int_{\widehat{\bsU}}d T(\bu,\nabla)\bgamma_\bsU(d\bu)=\int_{\widehat{\bsU}}\bigl(\, [Z_\bu]-\be(E,\nabla)\,\bigr)\bgamma_\bsU(d\bu)= \be(E,\nabla^E)-\be(E,\nabla).
\]
In particular, when $\nabla=\nabla^E$ we have
\begin{equation}
\int_{\widehat{\bsU}}d T(\bu,\nabla^E)\bgamma_\bsU(d\bu)=0.
\label{HL1}
\end{equation}
Conversely, the equality (\ref{HL1}) implies (\ref{BGC2}). One could then be tempted to prove  (\ref{BGC2}) by   proving  a stronger version of  (\ref{HL1}), namely
\begin{equation}
 \int_{\widehat{\bsU}} T(\bu,\nabla^E)\bgamma_\bsU(d\bu)=0.
 \label{PL1}
 \end{equation}
  The complexity of the  description of the transgression form  $T(\bu,\nabla^E)$ has discouraged us from attempting to verify the validity of  (\ref{PL1}). We have instead opted on a different approach based on the double-fibration trick frequently used in  integral geometry. \qed
\end{remark}

Obviously, the  equality (\ref{BGC2}) implies  (\ref{GBC1}) for  special (metric, connection)-pairs on $E$. Since the Euler form is   gauge invariant, we see that (\ref{BGC2}) is valid  if we replace the special connection $\nabla^E$ with a connection that is gauge equivalent to it. Here the gauge group is the group of orientation preserving, metric preserving automorphisms of $E$.    On the other hand, we have the following result.

\begin{proposition}   Any (metric, connection)-pair $(\bsi,\nabla)$ on an oriented vector bundle  $E\to M$  is gauge equivalent to a  special pair.
\end{proposition}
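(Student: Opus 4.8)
The plan is to realize an arbitrary (metric,connection)-pair as a special one by embedding $E$ isometrically into a trivial bundle in a way that is compatible with the connection. The key tool is a classical result: any vector bundle with a metric connection admits such a connection-preserving isometric embedding into a trivial bundle of sufficiently large rank. I would first recall the statement precisely: given $(E,\bsi,\nabla)$ over the compact $M$, there is an integer $N$, a metric $(-,-)_{\bsU}$ on $\bsU=\bR^N$, and a bundle monomorphism $\iota:E\hookrightarrow\underline{\bsU}_M$ which is a fiberwise isometry onto its image and intertwines $\nabla$ with the connection $P\,d$ induced by orthogonal projection onto $\iota(E)$. Granting this, set $E':=\iota(E)\subset\underline{\bsU}_M$ with its induced (metric,connection)-pair $((-,-)_{E'},\nabla^{E'}=P\,d)$, which is special by definition. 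The map $\iota$ is then by construction a bundle isomorphism $E\to E'$ that is orientation-preserving (after fixing orientations compatibly) and intertwines both the metrics and the connections, i.e.\ $\iota$ is exactly a gauge equivalence between $(\bsi,\nabla)$ and the special pair on $E'$. Since "special" is a property of the pair, not of the particular ambient embedding, this proves the proposition.

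For the embedding step itself, I would argue as follows. Choose first \emph{any} isometric bundle embedding $j:E\hookrightarrow\underline{\bV}_M=\bR^{N_0}\times M$ (possible since $M$ is compact: use a finite partition of unity subordinate to trivializing charts to build a fiberwise injective bundle map into a large trivial bundle, then polar-decompose to make it isometric, or equivalently embed $E$ as a subbundle of $\ker$ of a suitable idempotent). The projected connection $P_0\,d$ on $j(E)$ differs from $j_*\nabla$ by a skew-symmetric endomorphism-valued $1$-form $A\in\Omega^1(M,\mathfrak{so}(E))$, since both are metric connections on the same bundle. The standard trick now is to "absorb" the correction term $A$ by enlarging the trivial bundle: one writes $A$ in terms of finitely many global sections and $1$-forms and adds extra trivial summands whose contribution to the projected connection exactly cancels $A$. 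Concretely, if $\omega\otimes\xi$ is a term of $A$ with $\omega\in\Omega^1(M)$ and $\xi\in\End(E)$ skew, one adjoins a rank-one trivial summand and defines the new embedding on it using $\omega$, so that the new orthogonal projection picks up precisely $-\omega\otimes\xi$; iterating over the finitely many terms kills $A$ entirely. After rescaling to keep everything fiberwise isometric one obtains the desired connection-preserving isometric embedding into $\bR^N\times M$ with $N=N_0+(\text{finite})$.

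The main obstacle is the second step — producing the \emph{connection}-compatible isometric embedding rather than merely a metric one. An isometric embedding into a trivial bundle is elementary on a compact base, but matching the induced projection connection to the given $\nabla$ requires the absorption-of-the-correction-$1$-form argument above, and one must check that the modification can be done while (i) preserving the isometry on the original summand, (ii) not re-introducing a cross term between the old and new summands in the projected connection, and (iii) keeping $N$ finite, which uses compactness of $M$ to bound the number of charts and hence the number of terms in $A$. One clean way to organize this, which I would use if the direct construction gets unwieldy, is to invoke the Narasimhan–Ramanan type universal-connection result: the projection connection on the tautological subbundle over a Grassmannian $\Gr_r(\bR^N)$ is universal among metric connections for $N$ large, so $(E,\bsi,\nabla)$ is the pullback $f^*$ of the tautological data under a smooth classifying map $f:M\to\Gr_r(\bR^N)$; then $\iota(v):=(f(x), v)\in\bR^N\times M$ for $v\in E_x\subset f(x)$ exhibits $E$ as a subbundle of $\underline{\bR^N}_M$ with exactly the special pair, completing the proof. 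Finally I would note the orientation bookkeeping is harmless: one fixes the orientation on $\iota(E)$ to be the one transported from $E$, so $\iota$ is orientation-preserving by fiat, and the gauge group in the statement (orientation- and metric-preserving automorphisms) is precisely what $\iota$ realizes.
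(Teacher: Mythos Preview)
Your fallback route via the Narasimhan--Ramanan universal-connection theorem is exactly what the paper does: it observes that pullbacks of special pairs are special, and then quotes \cite{NR1} to produce a classifying map $\Phi:M\to\Gr_r^+(\bsU)$ with $(\bsi,\nabla)$ gauge-equivalent to $\Phi^*$ of the tautological (special) pair. So on that branch your proposal is correct and essentially identical to the paper's argument.

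Your direct ``absorption'' construction, however, has a gap as written. The claim that a single term $\omega\otimes\xi$ with $\xi\in\mathfrak{so}(E)$ can be cancelled by adjoining a \emph{rank-one} trivial summand is not right: the correction to the projected connection coming from an extra summand $\bR\cdot s$ with embedding determined by a section $s$ and a $1$-form is of the very constrained shape $u\mapsto -\omega\,(s,u)\,s$ (or its skew-symmetrization), which is rank $\leq 2$ in $\End(E_x)$ and far from an arbitrary skew endomorphism. Getting an honest cancellation of a general $A\in\Omega^1(M;\mathfrak{so}(E))$ requires adding many summands in a coordinated way over a cover---which is precisely the content of the Narasimhan--Ramanan construction you end up citing anyway. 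So either carry out that construction in full (it is not a one-line trick) or, as the paper does, simply invoke \cite{NR1}.
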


\begin{proof}    The proof is carried out in two steps.

\medskip   

\noindent {\bf 1.}  The pullback of a  special (metric, connection)-pair is a special  (metric connection)-pair.  Suppose that $(\bsi,\nabla)$ is a special (metric, connection)-pair  on the subbundle $E\to M$ of the trivial bundle $\underline{\bsU}_M$.

If $X$ is a smooth manifold    and $\Phi: X\to M$ is a smooth map, then we get a vector  bundle  $\Phi^*E$ over $X$ equipped with the  metric $\Phi^*\bsi$ and the compatible connection  $\Phi^*\nabla$.   The bundle $\Phi^*E$ is a subbundle of the trivial vector bundle
\[
\Phi^*\underline{\bsU}_M=\underline{\bsU}_X
\]
equipped with the trivial metric. Then $\Phi^*\bsi$ is the induced metric on $\Phi^* E$ as a subbundle of the metric bundle $\underline{\bsU}_X$ and $\Phi^*\nabla$ is the connection induced  via orthogonal projection from the  trivial connection on $\underline{\bsU}_X$.

\smallskip

\noindent {\bf 2.} Consider the Grassmannian  $\Gr^+_r(\bsU)$ of $r$-dimensional oriented subspaces of $\bsU$.    Denote by $\eT_r(\bsU)\to \Gr_r^+(\bsU)$ the associated tautologial oriented vector bundle. A metric $h$ on $\bsU$ induces  a  metric $\bsi_h$,  and a  compatible connection $\nabla^h$ on $\eT_r(\bsU)$. The pair $(\si_h,\nabla^h)$ is special.

  In \cite[Thm. 1, 2]{NR1}  Narasimhan and Ramanan have shown that  for any smooth,      real oriented vector bundle  $E\to M$ and any  (metric, connection)-pair $(\bsi,\nabla)$ on $M$ there exists a finite dimensional Euclidean space $(\bsU,h)$ and a smooth map $\Phi: M\to\Gr^+_r(\bsU)$ such that
 \begin{equation}\label{NRm}
 E=\Phi^*\eT_t(\bsU),\;\;\bsi=\Phi^*\bsi_h
 \end{equation}
 and the connection $\nabla$ is gauge equivalent to $\Phi^*\nabla^h$.  We will refer to such maps as  \emph{Narasimhan-Ramanan maps}.
\end{proof}

Putting together all of the above we obtain the first main result of this paper.

\begin{theorem} Suppose that $E\to M$ is a  smooth real oriented vector bundle of rank $r=2h$ over a smooth compact oriented manifold $M$ of dimension $m$. For any  metric $\bsi$ on $E$ and any connection $\nabla$ on $E$ compatible with $\bsi$ there exists   a finite dimensional  subspace $\widehat{\bsU}\subset C^\infty(E)$ and a Gaussian measure $\gamma$ on $\widehat{\bsU}$ such that,    $\gamma$-almost surely, a section $\hat{\bu}\in \widehat{\bsU}$ is  transversal to  the  zero section  and the expectation of the random zero-locus-cycle
\[
\widehat{\bsU}\ni\hat{\bu}\mapsto[Z_{\hat{\bu}}]\in \Omega_{m-r}(M)
\]
is equal to the current  determined by  the Euler form of $\nabla$.\label{th: A} \qed
\end{theorem}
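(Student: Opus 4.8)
The plan is to combine the two ingredients already assembled in the introduction. By the Proposition, any (metric, connection)-pair $(\bsi,\nabla)$ on $E$ is gauge equivalent to a \emph{special} pair, say $\bigl(\,(-,-)_E,\nabla^E\,\bigr)$, obtained from an embedding $E\hookrightarrow\underline{\bsU}_M$ into a trivial Euclidean bundle with its trivial metric and trivial connection. Here the gauge group is the group of orientation- and metric-preserving automorphisms of $E$. So I would first fix such a special pair and a gauge transformation $g$ intertwining $(\bsi,\nabla)$ and $\bigl(\,(-,-)_E,\nabla^E\,\bigr)$.

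Next, Theorem \ref{th: main_int} (quoted as \eqref{BGC2}) provides, for the special pair, a finite-dimensional subspace $\widehat{\bsU}_0\subset C^\infty(E)$ — the image of $\bsU$ under $\bu\mapsto S^E_\bu$ — carrying a Gaussian measure $\bgamma_\bsU$, such that $\bgamma_\bsU$-almost surely a section $\hat\bu\in\widehat{\bsU}_0$ is transversal to the zero section and
\[
\bsE_{\bgamma_\bsU}\bigl([Z_{\hat\bu}]\bigr)=\be(E,\nabla^E)^\dag
\]
as currents. I would then transport this ensemble by the gauge transformation: set $\widehat{\bsU}:=g(\widehat{\bsU}_0)=\{\,g\hat\bu;\ \hat\bu\in\widehat{\bsU}_0\,\}$, a finite-dimensional subspace of $C^\infty(E)$, and let $\gamma:=g_*\bgamma_\bsU$ be the pushforward Gaussian measure on $\widehat{\bsU}$. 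Since $g$ is a bundle automorphism, it maps the zero section to itself and preserves transversality, so $\gamma$-almost surely $g\hat\bu\pitchfork 0$; moreover $Z_{g\hat\bu}=Z_{\hat\bu}$ as oriented submanifolds because $g$ is orientation-preserving on $E$ and hence induces the identity on zero loci with their natural co-orientations. Therefore $[Z_{g\hat\bu}]=[Z_{\hat\bu}]$ and
\[
\bsE_{\gamma}\bigl([Z_{\hat\bv}]\bigr)=\bsE_{\bgamma_\bsU}\bigl([Z_{\hat\bu}]\bigr)=\be(E,\nabla^E)^\dag .
\]
Finally, the Euler form is gauge-invariant, so $\be(E,\nabla^E)=\be(E,\nabla)$ as forms, and the expected current equals the current determined by the Euler form of $\nabla$, which is the assertion.

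The one point that genuinely requires care — and which I expect to be the main (though modest) obstacle — is checking that the natural orientation/co-orientation on the zero locus $Z_{\hat\bu}$ is preserved under a metric- and orientation-preserving gauge transformation $g$, so that $[Z_{g\hat\bu}]=[Z_{\hat\bu}]$ holds \emph{with signs}, not merely as unoriented sets. Concretely, the co-orientation of $Z_{\hat\bu}$ is read off from the derivative of the section composed with the orientation of $E$; since $g$ is an orientation-preserving isometry of $E$ over the identity of $M$, it identifies the normal data of $Z_{g\hat\bu}$ with that of $Z_{\hat\bu}$ compatibly with orientations, so the two currents of integration coincide. Everything else — the gauge invariance of $\be$, the invariance of transversality, the pushforward of a Gaussian measure under a linear isomorphism being Gaussian — is standard, so once this sign bookkeeping is in place the theorem follows immediately by concatenating the Proposition and Theorem \ref{th: main_int}.
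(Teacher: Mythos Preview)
Your proposal is correct and follows essentially the same route as the paper: combine the Proposition (every (metric, connection)-pair is gauge equivalent to a special one) with Theorem \ref{th: main_int} and the gauge invariance of the Euler form. The transport by $g$ is in fact unnecessary---since $\be(E,\nabla^E)=\be(E,\nabla)$ as forms, the ensemble $(\widehat{\bsU}_0,\bgamma_\bsU)$ for the special pair already satisfies $\bsE_{\bgamma_\bsU}([Z_{\hat\bu}])=\be(E,\nabla)^\dag$ directly---but your argument with the transport (and the orientation bookkeeping) is also valid.
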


Clearly the above result implies the classical   Gauss-Bonnet-Chern theorem,  but it has a glaring {\ae}sthetic flaw since it gives no idea on the nature of the ensemble $(\widehat{\bsU},\bgamma_\bsU)$. Its relationship to the geometry of $(E,\si, \nabla)$   is hidden in the  details of the proofs of \cite[Thm. 1,2]{NR1}. Those proofs   show that, to produce   such an ensemble, we need to make  several    noncanonical choices: a choice of a  gluing cocycle for $E$  and    a choice of a collection of locally  defined  $\uso(n)$-valued $1$-forms describing   $\nabla$. The dependence of $(\widehat{\bsU}, \bgamma_\bsU)$ on these choices  is  nebulous.

The second goal of the paper  is to  address this issue. To formulate   our  second main result  we need to   describe an alternate way of producing special (metric, connection)-pairs.

Suppose that  $\bsU\to C^\infty(E)$ is a    finite dimensional space  of  sections of $E$ large enough so that it satisfies the \emph{ampleness condition}
\begin{equation}
\spa\bigl\{\bu(\bx);\;\;\bu\in\bsU\,\bigr\}=E_\bx,\;\;\forall \bx\in M.
\label{ample0}
\end{equation}
In particular, for every $\bx\in M$,  the evaluation map
\[
\ev_\bx:\bsU\to E_\bx,\;\;\bu\mapsto\ev_\bx \bu:=\bu(\bx)
\]
is onto,  so that its dual $\ev_\bx^*: E_\bx^*\to \bsU^*$ is one-to-one. Thus,   the dual bundle $E^*$ is naturally a subbundle of $\underline{\bsU}_M^*$.  

If we fix an inner product $(-,-)_\bsU$ on $\bsU$, then we can identify   $\bsU$ with $\bsU^*$ and we  can view $E$ as a subbundle of the trivial bundle $\underline{\bsU}_M$. Fixing a Euclidean metric on $\bsU$   is equivalent with fixing a nondegenerate Gaussian probability   measure $\gamma_\bsU$ on $\bsU$; see Subsection \ref{ss: 41}.   This  discussion shows that to any  nondegenerate   Gaussian probability measure  on an ample subspace $\bsU\subset C^\infty(E)$ we can cannonically  associate a special (metric, connection)-pair on $E$.   

\begin{definition}\label{def: smaple} A \emph{sample subspace} of $C^\infty(E)$  is  a pair $(\bsU,\gamma)$,  where $\bsU\subset C^\infty(E)$ is  an ample finite dimensional subspace and $\gamma$ is a nondegenerate Gaussian measure on $\bsU$.   The space $\bsU$ is called the \emph{support} of the sample space. \qed
\end{definition} 

 Thus, to any sample subspace $(\bsU,\gamma)$  of $C^\infty(E)$ we can associate a  special (metric,  connection)-pair on $E$.  Theorem \ref{th: main_int} shows that the   expectation  of the random current defined  by the zero-locus  of a random $\bu\in\bsU$   is equal to the current determined by  the Euler form of the associated special (metric, connection)-pair.   

In Theorem \ref{th: scl}  we show that any (metric, connection)-pair  $(\bsi_0,\nabla^0)$    on $E$  can be approximated in a rather \emph{explicit} fashion by  special (metric,connection)-pairs    associated to  sample subspaces  canonically and explicitly determined by $(\bsi_0,\nabla^0)$.

  More precisely, in Theorem \ref{th: scl} we produce \emph{explicitly}    a family of  sample spaces $(\bsU_\ve,\gamma_\ve)_{\ve>0}$ with associated special (metric,connection)-pairs  $(\bsi_\ve,\nabla^\ve$)  satisfying the following properties.
 \begin{subequations}
 \begin{equation}
 \ve_1<\ve_2\Ra \bsU_{\ve_1}\supset \bsU_{\ve_2},
 \label{scla}
 \end{equation}
 \begin{equation}
 \bigcup_{\ve>0}\bsU_\ve \;\;\mbox{is dense in $C^\infty(E)$},
 \label{sclb}
 \end{equation}
 \begin{equation}
 \Vert|\bsi_\ve-\bsi_0\Vert_{C^0}=o(1),\;\;\mbox{as $\ve\to 0$}
 \label{sclm}
 \end{equation}
  \begin{equation}
 \Vert \nabla^\ve-\nabla^0\Vert_{L^{1,p}} +\Vert F^\ve-F^0\Vert_{C^0}= o(1)\;\;\mbox{as $\ve\to 0$},\;\;\forall p\in (1,\infty)
\label{sclc}
\end{equation}
\end{subequations}
where $L^{1,p}$ denotes the  Sobolev space of   distributions   with first order derivatives in $L^p$ while   $F^\ve$ denotes the curvature of $\nabla^\ve$.

For each $\ve$, the sample space $\bsU_\ve$ produces  a smooth map $\Psi_\ve: M\to\Gr^+_r(\bsU_\ve)$.  If $\nah^\ve$ denotes the canonical connection of the tautological vector bundle over $\Gr_r^+(\bsU_\ve)$, then 
\[
\Psi_\ve^*\nah^\ve=\nabla^\ve.
\]
Theorem \ref{th: scl}  shows that $\Psi_\ve^*\nah^\ve$ is very close to $\nabla^0$  for $\ve$ small. From this  perspective we can  view  Theorem \ref{th: scl} as  providing a probabilitic construction of   approximate  Narasimhan-Ramanan maps; see (\ref{NRm}).

Let us observe that Theorem \ref{th: scl} also  implies   the Gauss-Bonnet-Chern theorem for  the  pair $(\bsi_0,\nabla^0)$, but without appealing to the results of Narasimhan and Ramanan \cite{NR1}. Indeed,  (\ref{BGC2}) implies  that for any $\ve>0$ and any   $\eta\in \Omega^{n-r}(M)$ we have
\[
  \int_{\bsU_\ve}\lan \eta,[Z_\bu]\ran \gamma_\ve(d\bu)=\int_M\eta\wedge \be(E,\nabla^\ve)\iff \bsE\bigl([Z_\bu]|\bu\in\bsU_\ve\bigr) =\be(E,\nabla^\ve)^\dag.
\]
We let $\ve\to 0$  and we  conclude   from (\ref{sclc}) that, 
\begin{equation}
 \lim_{\ve\to 0}\int_{\bsU_\ve}\lan \eta,[Z_\bu]\ran \gamma_\ve(d\bu)=\int_M\eta\wedge \be(E,\nabla^0),\;\;\forall \eta\in \Omega^{m-r}(M).
\label{main}
\end{equation}
On the other hand, (\ref{avcurr}) shows  that for any  generic section $\bu_0$ of $E$, any \emph{closed} form $\eta\in\Omega^{m-r}(M)$ and any $\ve>0$ we have
\[
\lan \eta, [Z_{\bu_0}]\ran =  \int_{\bsU_\ve}\lan \eta,[Z_\bu]\ran \gamma_\ve(d\bu).
\]

As we have mentioned earlier, the spaces $\bsU_\ve$  can be constructed \emph{explicitly}.   We were led to these spaces guided by probabilistic ideas, but they can be given a purely analytic description.  In either interpretation, these spaces  depend on two additional choices.    

The first choice is \emph{a Riemann metric $g$ on $M$}. Form the covariant Laplacian  $\Delta_0=(\nabla^0)^*\nabla^0: C^\infty(E)\to C^\infty(E)$.  It has a discrete spectrum
\[
\spec(\Delta_0)=\lambda_1\leq \lambda_2\leq \cdots.
\]
Let  $(\Psi_n)_{n\geq 1}$ be a complete orthonormal   family of $L^2(E)$ consisting of eigensections of $\Delta_0$,
\[
\Delta_0\Psi_n=\lambda_n\Psi_n.
\]
Our first candidate for  the approximating family  $\bsU_\ve$ is defined by
\[
\bsU_\ve:=\spa\Bigl\{\,\Psi_n;\;\;\lambda_n\leq\ve^{-2}\,\Bigr\}.
\]
As metric $\bsi_\ve$ on $\bsU_\ve$ we  use the $L^2(E)$-inner product rescaled by  the factor $\ve^m$.    The family $(\bsU_\ve)_{\ve>0}$ satisfied  (\ref{scla}) and (\ref{sclb}) and with a little work  it can be shown that is also satisfies (\ref{sclm}).   However,  proving that this family of sample spaces  also satisfies (\ref{sclc}) is fraught with many technical difficulties.    To avoid  them  we need to tweak this approach. 

Let  $\bchi:\bR\to [0,\infty)$  be  the characteristic function of the interval $[-1,1]$. Observe that  $\bsU_\ve$  can alternatively be defined as the range of the  smoothing operator $\bchi(\ve\sqrt{\Delta_0})$.  We now make  our second choice and  we  \emph{fix a compactly supported, smooth, even function} $w:\bR\to[0,\infty)$ such that $w(0)>0$.  Intuitively, we think of $w$ as a smooth approximation for $\bchi$. For any $\ve>0$ we   have a smoothing operator
\[
W_\ve := w\bigl(\ve\sqrt{\Delta_0}\,\bigr): L^2(E)\to L^2(E).
\]
The operator $W_\ve$  is  symmetric, nonnegative definite and has finite dimensional range $\bsU_\ve:= {\rm Range}\,W_\ve$.     Clearly the family $(\bsU_\ve)_{\ve>0}$ satisfies (\ref{scla}) and (\ref{sclb}). In particular, this shows that $\bsU_\ve$ is ample if $\ve$ is sufficiently small. 

The space  $\bsU_\ve$ is also a  $W_\ve$-invariant subspace of $L^2(E)$ and the restriction  of $W_\ve$ to $\bsU_\ve$ is invertible because $w(0)\neq 0$.  The Gaussian measure  $\gamma_\ve$ is then defined by
\[
\gamma_\ve(d\bu) =\frac{1}{\sqrt{\det 2\pi W_\ve}}  e^{-\frac{1}{2}(W_\ve^{-1}\bu,\bu)_0} |d\bu|_0,
\]
where $(-,-)_0$ denotes the $L^2$-inner product on $\bsU_\ve$ and $|d\bu|_0$ denotes the associated Lebesgue measure. In Theorem \ref{th: scl} we prove that the family of sample spaces $(\bsU_\ve,\gamma_\ve)$  defined in this fashion satisfy  all the properties (\ref{scla})-(\ref{sclc}).

The  sample space $(\bsU_\ve,\gamma_\ve)$ as defined above has a simple  probabilistic interpretation.       A random  section $\bu_\ve\in\bsU_\ve$ is   a random linear superposition
\begin{equation}
\bu_\ve=\sum_n X_n^\ve \Psi_n,
\label{rand_super}
\end{equation}
where the coefficients  $X_n^\ve$ are  independent   normal random variables with mean $0$ and variances
\[
\var(X_n^\ve)=w(\ve\sqrt{\lambda_n}).
\]
The correlation kernel of the random section $\bu_\ve$  coincides with the Schwartz kernel of $W_\ve$, and the  connection of $E$ determined by the Gaussian  ensemble $(\bsU_\ve,\bgamma_\ve)$  is  a special case of the $L$-$W$ connection  in  \cite[Prop. 1.1.1]{ELL}. We provide  probabilistic descriptions of this connection and its curvature  in  Subsection \ref{ss: 42}. These descriptions play a key role in the proof of Theorem \ref{th: scl}.

 Note that for  any given $\ve$ we have $w(\ve\sqrt{\lambda_n})=0$ if $n$ is sufficiently large so that the sum (\ref{rand_super}) consists of finitely many terms. If $w=1$ in a neighborhood of $0$, then as $\ve\to 0$  the above     random   linear superposition   formally converges  to a   random series
\[
\sum_n X_n^0 \Psi_n,
\]
where the coefficients  $X_n^0$ are  independent standard  normal random variables. This is very similar to the   classical scalar white noise.  In fact,  as explained in \cite{GeVi2},  the above series converges in the sense of distributions to   a generalized Gaussian random process  called  white noise. For this reason we will refer to the $\ve\to 0$ limit as the  white-noise limit. Thus, the differential geometry of $(E,\bsi_0,\nabla^0)$  is determined  by the white-noise approximation regime    defined by the family of random sections $\bu_\ve$, $\ve >0$. Observe also that the equality  (\ref{main})  has the following nice consequence.
\begin{corollary}\label{cor: sz}
\[
\lim_{\ve\searrow 0}\bsE
\bigl(\,[Z_{\bu_\ve}]\,\bigr)=\be\bigl(\, E,\nabla^E\,\bigr)^\dag.\proofend
\]
\end{corollary}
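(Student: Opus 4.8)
The plan is to unwind the definition of convergence of currents and then quote the integral identity (\ref{main}), which already packages all of the analytic content — in particular the curvature estimate (\ref{sclc}) supplied by Theorem \ref{th: scl}. Recall that a family $(C_\ve)_{\ve>0}$ of $(m-r)$-dimensional currents converges, as $\ve\searrow 0$, to a current $C\in\Omega_{m-r}(M)$ precisely when $\lan\eta,C_\ve\ran\to\lan\eta,C\ran$ for every test form $\eta\in\Omega^{m-r}(M)$; this weak-$*$ sense is the one intended in the statement of the corollary.

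First I would make sure the left-hand side is meaningful. By (\ref{sclb}) the support $\bsU_\ve$ is ample once $\ve$ is small, so Theorem \ref{th: main_int}(i) guarantees that a $\gamma_\ve$-random section $\bu$ of $E$ is $\gamma_\ve$-almost surely transversal to the zero section; hence $[Z_\bu]$ is $\gamma_\ve$-a.s.\ a well-defined closed current, and the integral formula (\ref{intfor}) of Theorem \ref{th: main_int} shows that
\[
\bsE\bigl([Z_{\bu_\ve}]\bigr)=\bsE_{\gamma_\ve}\bigl([Z_\bu]\bigr)=\int_{\bsU_\ve}[Z_\bu]\,\gamma_\ve(d\bu)\in\Omega_{m-r}(M)
\]
is a genuine current whose pairing against $\eta$ equals $\int_{\bsU_\ve}\lan\eta,[Z_\bu]\ran\,\gamma_\ve(d\bu)$. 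Next, by the definition of the Poincar\'e dual current, $\bigl\lan\eta,\be(E,\nabla^E)^\dag\bigr\ran=\int_M\eta\wedge\be(E,\nabla^E)$ for every $\eta\in\Omega^{m-r}(M)$, where $\nabla^E$ is the originally given connection, written $\nabla^0$ in (\ref{main}). Combining these two observations with (\ref{main}) gives, for every $\eta\in\Omega^{m-r}(M)$,
\[
\lim_{\ve\searrow 0}\bigl\lan\eta,\bsE\bigl([Z_{\bu_\ve}]\bigr)\bigr\ran=\lim_{\ve\searrow 0}\int_{\bsU_\ve}\lan\eta,[Z_\bu]\ran\,\gamma_\ve(d\bu)=\int_M\eta\wedge\be(E,\nabla^E)=\bigl\lan\eta,\be(E,\nabla^E)^\dag\bigr\ran,
\]
and since $\eta$ is arbitrary this is exactly the assertion $\bsE([Z_{\bu_\ve}])\to\be(E,\nabla^E)^\dag$.

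There is essentially no obstacle here: the corollary is a reformulation of (\ref{main}) in the language of currents, so the work all lies upstream. The only point worth emphasizing is that (\ref{main}) is valid for \emph{all} smooth test forms $\eta$, not merely for closed ones; it is this stronger-than-homological statement — coming from the exact equality of currents (\ref{BGC2}) together with the approximation (\ref{sclm})--(\ref{sclc}) of Theorem \ref{th: scl} — that upgrades the homological Gauss--Bonnet--Chern identity to honest weak convergence of the expected zero-locus currents.
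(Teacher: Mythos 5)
Your proposal is correct and is exactly the paper's argument: the corollary is stated with \verb|\proofend| in place precisely because it is an immediate restatement of (\ref{main}) in the weak-$*$ language of currents, with the well-definedness of $\bsE([Z_{\bu_\ve}])$ coming from ampleness of $\bsU_\ve$ for small $\ve$ and Theorem \ref{th: main_int}. Your remark that (\ref{main}) holds for all test forms $\eta$, not just closed ones, correctly identifies why this is genuine convergence of currents rather than merely a homological statement.
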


\subsection{Related work}        The results  in this paper   take place on real manifolds and real vector bundles and deal with two themes: the distribution of zeros of random  sections and     the connections between the statistics of such suctions and the geometry of the bundle.

This line of investigation originates in the groundbreaking work of M. Kac \cite{Kac43} and S.O. Rice \cite{Rice45} who studied the distribution of zeros of certain  random functions of one real  variable.  One outcome of their work is the celebrated  Kac-Rice formula that gives an explicit   description of the expected distribution of  zeros  of such functions; see \cite{EK}.  

As explained  e.g. in \cite{EK, ShVa}, the Kac-Rice  formula  has an  extension to  complex valued random functions of one complex  variable that can be used to study  the distribution of complex zeros of various ensembles  of random complex polynomials.  One such ensemble   is obtained by regarding the degree $d$ polynomials  in one complex variable  as holomorphic  sections of the  degree $d$ holomorphic line bundle over $\mathbb{CP}^1$.  If  the ensemble of sections is   unitarily equivariant, then the  expected  distribution of zeros  approaches the uniform  (invariant) distribution on $\mathbb{CP}^1$ as $d\to\infty$; see \cite[\S 8]{EK}.  The same type of equidistribution  phenomenon was observed in the pioneering work of S. Nonnenmaker and A. Voros \cite{NV} where, among many other things,   the authors studied   the distributions  of zeros  of random holomorphic sections of holomorphic line bundles over elliptic  curves. 

In  the holomorphic context, the inherent rigidity allows for more precise conclusions in arbitrary dimensions.  Chapter 5 of the  monograph \cite{MaMa} by X. Ma and G. Marinescu   contains a very nice exposition of these  developments. We mention below a few of them.
 
  In  \cite{SZ1}, Schiffman and Zelditch  have  investigated  random holomorphic  sections  of  $L^n$, $n\gg 1$,  where $L$ is an ample hermitian holomorphic  line bundle $L$ over a compact K\"{a}hler    manifold $M$.      Our Corollary \ref{cor: sz}   has the same flavor as  \cite[Thm. 1.1]{SZ1} ; see also \cite[Thm.5.3.3.]{MaMa}.

The large  $n$ limit is  conceptually  similar to the white noise limit we employ in this paper  although   the technical details are quite different.    G. Tian \cite{Tian} and W.-D. Ruan \cite{Ruan} have  shown  how   to use the ensemble of holomorphic sections of $L^n$, $n\gg 1$, to produce $C^\infty$-approximations of the curvature of $L$.     D. Catlin \cite{Cat} and  S. Zelditch \cite{Z}    gave alternate proofs of this fact where the probabilistic  features  are  easier to glean.  Our proof of Theorem \ref{th: scl} is similar in spirit to theirs.  

In the last few years there has been a flurry of work, e.g.,   \cite{CM, CMM, CMN, DMS},   concerning  the statistics of the zero sets  of  random  holomorphic sections of $L^n$ in the case when $M$ is noncompact/singular.

 There are  fewer similar results  for real manifolds, and they are typically harder to come by.  This should come as no surprise since even the simplest problem, that of counting the number of real roots of a polynomial, is tricky.  We want to mention a few noteworthy contributions.  
 
  The first is  the work of R. Adler and J. Taylor \cite{AT} on the curvature measures of the zero sets of  random maps  form a manifold a real vector space. The second  is  the (ongoing) investigation of  F. Nazarov and M. Sodin \cite{NS1,NS2}  concerning the topology  of the zero set of a  random function on a manifold.  Finally, we want to mention  the recent work of A. Lerario and E. Lundberg \cite{LL} on a   probabilistic version of  Hilbert's 16th problem.
 
 The original   one-dimensional  Kac-Rice formula admits  wide ranging  higher-dimensional generalizations,  \cite{AT,AzWs}.    In \cite{N5} the first author used such a general Kac-Rice formula  to extend Theorem \ref{th: main_int} to arbitrary Gaussian ensembles of random sections, that is,  arbitrary Gaussian measures on $C^\infty(E)$, not necessarily supported on finite dimensional sample spaces.

  In Theorem \ref{th: scl}  we produce only $C^0$-approximations of the curvature  of the vector bundle.  However, in the special case when $E=TM$, $\bsi_0$ is a Riemannian metric on $M$   and $\nabla^0$ is the associated Levi-Civita  connection, then the results in \cite{AC, Pet}  imply that (\ref{sclm}) can be refined to a $C^\infty$-convergence of $\bsi_\ve$ to the Riemann metric $\bsi_0$.

In  \cite{N4} the first author  has investigated critical sets of   random functions on a compact Riemann manifold.   The critical points of  functions are zeros of rather special  sections of the cotangent bundles, namely zeros of exact $1$-forms.  In \cite[Thm.1.7]{N4}   it was shown  that  the  geometry of a Riemann manifold   is determined by the statistics of the differentials of  random functions on it.  This is similar in flavor  with   Theorem \ref{th: scl} in the present paper. However  \cite[Thm.1.7]{N4} does not follow from the apparently more general  Theorem \ref{th: scl} in this paper.

 \subsection{Organization of the paper}  The main body of the paper consists of two sections. In Section \ref{s: 2} we prove  our main integral formula  Theorem \ref{intfor} which states that if $(\bsU,\gamma)$ is a sample space of $C^\infty(E)$, then the  expectation of the zero-locus-current of a random   section $\bu\in\bsU$  is  equal to the current determined by the Euler form of the  special connection on $E$ induced  by this sample space.   The proof  relies on the ubiquitous double-fibration trick. We evaluate the various  intervening integrals   using the  theory  of orthogonal invariants like in  Weyl's proof of his tube formula \cite{Wetube}.  
 
 Section \ref{s: 3}   contains the proof of our   reconstruction result, Theorem \ref{th: scl}.  It boils down to a   detailed understanding of the Schwartz kernel  of  the smoothing operator $w(\ve\sqrt{\Delta_0})$.
 
 We approach     this problem using  the wave kernel  technique pioneered by L. H\"{o}rmander \cite{Hspec}.  The fact that our operators are not  scalar  makes the identification  of  various terms in the asymptotic expansion of this kernel more challenging. We achieve this by gradually reducing the computation of these terms  to the special case  involving the heat kernel. The estimate (\ref{sclc}) is trickier and  follows using a method reminiscent  to the one employed by K. Uhlenbeck in \cite{U}.
 
 \subsection*{Acknowledgments.}  We want to thank the anonymous  referee for the  helpful comments and critique.

\section{A finite dimensional integral  formula} 
\label{s: 2}
\setcounter{equation}{0} 

\subsection{The setup} Suppose that $M$ is a compact oriented smooth manifold of dimension $m$ and $E\to \bR$ is a real, oriented vector bundle of  even rank $r=2h$.  We fix a finite dimensional  space $\bsU\subset  C^\infty(E)$, 
\[
\dim\bsU=N.
\]
  Any $\bx\in M$ defines a linear evaluation map
\[
\ev_\bx:\bsU\to E_\bx,\;\;\bsU\ni\bu \mapsto \bu(\bx).
\]
We assume that $\bsU$  satisfies the ampleness condition (\ref{ample0}). The dual map $\ev_\bx^*: E^*_\bx\to \bsU^*$ is an injection  and the   family  $(\ev_\bx^*)_{\bx\in M}$  describes an inclusion  of $E^*$ as a subbundle of the trivial vector bundle $\underline{\bsU}^*_M$.

We fix an Euclidean metric $(-,-)_\bsU$ on $\bsU$. It induces a metric $(-,-)_{\bsU^*}$  on $\bsU^*$. The inclusion  
\[
\ev^*:E^*\to \underline{\bsU}_M^*
\]
 induces a metric $(-,-)_{E^*}$ on  the bundle $E^*$ and, by duality, a metric $(-,-)_E$  on $E$.   

The evaluation map   $\ev_\bx:\bsU\to E_\bx$   can be identified with the  orthogonal projection. To emphasize this aspect, we will use the alternate notation $P=P_{\bx}:=\ev_\bx$. We also set 
$Q=Q_{\bx}=\one-P_{\bx}$.

If we choose   an orthonormal basis $(\Psi_k)_{1\leq k\leq N}$ of $\bsU$,  then  we can describe the projection $P_\bx$ in  the concrete form
 \[
 P_\bx \bu =\sum_{k=1}^N (\bu,\Psi_k)_\bsU\Psi_k(\bx).
 \]
Let us point a confusing fact. A \emph{fixed}  vector $\bu\in\bsU$ can   be viewed as a constant section of the trivial  bundle $\underline{\bsU}_M$ and also, by definition, as a section of  $E$. As such it is given by the  smooth map
\[
S^E_\bu: M\to \bsU,\;\; S^E_\bu(\bx)= \ev_\bx\bu=P_{\bx}\bu.
\]
We denote by $K$  the subbundle of $\underline{\bsU}_M$ defined by the kernels of the above projections, $K:=\ker  P$. Note that
\[
E=K^\perp,\;\; E\oplus K\cong \underline{\bsU}_M=\bsU\times M.
\]
If we denote by $d$ the trivial connection on $\underline{\bsU}_M$, then  we obtain a connection on $\nabla^E$ on  $E$ compatible with the metric $(-,-)_E$,
\[
\nabla^E:=P dP.
\]
We denote by $F^E$ the curvature of the connection $\nabla^E$ on $E$ and by $\be(E,\nabla^E)$ the associated Euler form defined as in (\ref{euler})
\[
\be(E,\nabla^E)=\frac{1}{ (2\pi)^{h} } \pf(-F^E)\in\Omega^r(M),\;\;r=2h.
\]
If a  section $\bu\in\bsU$ is transversal to the zero section, $\bu\pitchfork 0$, then its zero set
 \[
 Z_\bu:=\bigl\{ \, \bx \in M;\;\;\bu(x)=0\,\bigr\}
 \]
 is  a     compact submanifold of $M$ of codimension $r$. We denote by $T_{Z_\bu}M$ its normal bundle in $M$,
 \[
 T_{Z_\bu}M:=TM|_{Z_\bu}/TZ_\bu.
 \]
 Given  any connection  $\nabla$ on $E$ we obtain a linear map 
 \[
 \nabla_{\bullet} \,\bu: (TM)|_{Z_\bu}\to E|_{Z_\bu}
 \]
 which vanishes  along $TZ_\bu$ and thus induces a bundle morphism
 \[
 \mathfrak{a}_\bu:  T_{Z_\bu}M\to E|_{Z_\bu}
 \] 
 that is independent of the choice of $\nabla$. We will refer to $\mathfrak{a}_\bu$ as the \emph{ajunction morphism}.

 The transversality $\bu\pitchfork 0$   is equivalent to the fact that  $\mathfrak{a}_\bu$ is a bundle isomorphism. The orientation on $E$ induces via the adjunction morphism  an orientation in the normal bundle $(TM)|_{Z_\bu}$ and thus an orientation on  $Z_\bu$ uniquely  determined by the requirement
 \[
 {\rm orientation}\, TM|_{Z_\bu}={\rm orientation}\, (Z_\bu)\wedge {\rm orientation}\, (T_{Z_\bu} M).
 \]
 Let us point out that since $Z_\bu$ has \emph{even} codimension we have
 \[
 {\rm orientation}\, (Z_\bu)\wedge {\rm orientation}\, (T_{Z_\bu} M)= {\rm orientation}\, (T_{Z_\bu} M)\wedge {\rm orientation}\, (Z_\bu).
 \]
 We denote by $[Z_\bu]\in \Omega_{m-r}(M)$ the integration current  defind by the submanifold  $Z_\bu$  equipped with the above orientation.

 \begin{theorem}     Let $E\ra M$ be a   real oriented, smooth vector bundle of rank $r=2h$ over   the compact oriented   smooth manifold $M$. Fix a   subspace  $\bsU\subset C^\infty(E)$   of  dimension $\dim \bsU=N<\infty$ satisfying  the ampleness condition (\ref{ample0}).  Fix an Euclidean inner product $(-,-)_\bsU$ on $\bsU$ and denote by $\gamma_\bsU$ the    Gaussian measure on $\bsU$ determined by this inner product,
 \[
 \gamma_\bsU(d\bu):=\frac{1}{(2\pi)^{\frac{N}{2}}} e^{-\frac{|\bu|^2}{2}} d\bu.
 \]
 Then the  following   hold.
 
 \begin{enumerate} 
 
 \item A section $\bu\in\bsU$  almost surely  intersects  transversally the zero section   of $E$ and thus we obtain a random    current
 \[
 \bsU\ni \bu \mapsto [Z_\bu]\in \Omega_{m-r}(M).
 \]
 \item   The expectation of    this  random current is the current determined by  the Euler form $\be(E,\nabla^E)$
 \[
 \bsE_{\gamma_\bsU}( [Z_\bu])=\be(E,\nabla^E)^\dag.
 \]
 More precisely,
 \begin{equation}
\int_\bsU\lan \eta, [Z_\bu]\ran d\gamma_\bsU(du)=\frac{1}{(2\pi)^{\frac{r}{2}}}\int_M \eta \wedge \pf(-F^E),\;\;\forall \eta\in \Omega^{m-r}(M).
 \label{intfor}
 \end{equation}

 \end{enumerate}
 \label{th: main_int}
 \end{theorem}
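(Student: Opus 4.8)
The plan is to prove (i) by a Sard-type transversality argument and (ii) by the double-fibration trick of integral geometry, converting the left-hand side of (\ref{intfor}) into a fibre integral over $M$ whose integrand is then identified with $(2\pi)^{-r/2}\pf(-F^E)$ via an orthogonal-invariant computation.

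For (i), I would introduce the evaluation morphism $\Phi:M\times\bsU\to E$, $\Phi(\bx,\bu)=\ev_\bx\bu$, which by the ampleness condition (\ref{ample0}) is fibrewise surjective, hence a submersion onto the total space of $E$. Consequently the universal zero locus $\eZ:=\Phi^{-1}(\text{zero section})=\{(\bx,\bu):\bu(\bx)=0\}$ is a smooth submanifold of $M\times\bsU$ of codimension $r$; its first projection $q:\eZ\to M$ exhibits $\eZ$ as the total space of the vector bundle $K=\ker P\to M$, and its second projection $\pi:\eZ\to\bsU$ has the property that $\bu$ is a regular value of $\pi$ if and only if the section $\bu$ of $E$ is transversal to the zero section. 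Sard's theorem then says the set of non-regular values has Lebesgue measure zero in $\bsU$, hence $\gamma_\bsU$-measure zero since $\gamma_\bsU$ is absolutely continuous; this proves (i), the orientation of $Z_\bu$ being the one induced by the adjunction morphism as in the setup.

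For (ii): when $\bu\pitchfork 0$ the fibre $\pi^{-1}(\bu)$ is carried diffeomorphically onto $Z_\bu$ by $q$, so $\langle\eta,[Z_\bu]\rangle=\int_{\pi^{-1}(\bu)}q^*\eta$; integrating first along the fibres of $\pi$ and invoking Fubini gives
\[
\int_\bsU\langle\eta,[Z_\bu]\rangle\,d\gamma_\bsU(\bu)=\int_\eZ q^*\eta\wedge\pi^*\gamma_\bsU=\int_M\eta\wedge\rho,\qquad\rho:=q_*\bigl(\pi^*\gamma_\bsU\bigr)\in\Omega^r(M),
\]
the last step being fibre integration along $q$ (the even codimension of $Z_\bu$ makes the commutation of orientations in the wedge products harmless). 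Thus (\ref{intfor}) reduces to the pointwise identity $\rho_\bx=(2\pi)^{-r/2}\pf(-F^E)_\bx$. To compute $\rho_\bx$ I would evaluate it on a coordinate frame $\pa_{x^1},\dots,\pa_{x^r}$ and use the coarea formula along the fibres of $q$: this expresses $\rho_\bx(\pa_{x^1},\dots,\pa_{x^r})$ as $\bsE_{\gamma_\bsU}\bigl[\delta_0(\bu(\bx))\det\bigl((\nabla^E_{\pa_{x^j}}\bu,\be_\beta)_\bx\bigr)_{\beta,j}\bigr]$ for a positively oriented orthonormal frame $\be_1,\dots,\be_r$ of $E$ near $\bx$. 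Disintegrating, the factor $\delta_0(\bu(\bx))$ contributes the value density at $0$ of the $E_\bx$-valued Gaussian $\bu(\bx)=P_\bx\bu$, which equals $(2\pi)^{-r/2}$ because $P_\bx$ is an orthogonal projection and $\bu$ is standard Gaussian; it remains to evaluate $\bsE\bigl[\det\bigl((\nabla^E_{\pa_{x^j}}\bu,\be_\beta)\bigr)\mid\bu\in K_\bx\bigr]$, where conditionally $\bu$ is standard Gaussian on $K_\bx=\ker\ev_\bx$ and $\nabla^E_v\bu|_\bx=P_\bx(dP)_v\bu$ reduces, for $\bu\in K_\bx$, to the shape operator $(dP)_v|_{K_\bx}:K_\bx\to E_\bx$ applied to $\bu$.

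The heart of the matter — and the step I expect to be the main obstacle — is this last Gaussian moment. The covariances $\bsE\bigl[(\nabla^E_{\pa_{x^i}}\bu,\be_\alpha)(\nabla^E_{\pa_{x^j}}\bu,\be_\beta)\mid\bu\in K_\bx\bigr]$ are, by the previous reduction, inner products of second fundamental forms of $E\subset\underline{\bsU}_M$, and the Gauss equation for the special connection $\nabla^E=PdP$ identifies their antisymmetrization in the frame indices (equivalently in the coordinate indices) with the curvature components $F^E_{\alpha\beta}(\pa_{x^i},\pa_{x^j})$. Expanding $\bsE[\det(\cdots)]$ by Wick's theorem produces a double sum over pair partitions of $\{1,\dots,r=2h\}$ and over permutations $\sigma\in\eS_r$ of the frame indices, of signed products of covariance entries; the signed sum over $\sigma$ performs the antisymmetrization, turning each contracted pair of covariances into a curvature component, while the sum over pair partitions supplies the remaining antisymmetrization. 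Comparing the result term by term with the explicit Pfaffian formulas (\ref{pf1})--(\ref{pf2}), all the combinatorial constants — the $2^h h!$ from the definition of $\pf$, the $h!$ from (\ref{pf2}), the number $(2h)!/(2^h h!)$ of pair partitions, and the sign incurred per contraction — must collapse to give exactly $\rho_\bx=(2\pi)^{-r/2}\pf(-F^E)_\bx$, whence (\ref{intfor}). This bookkeeping is precisely the kind of orthogonal-invariant computation underlying Weyl's proof of the tube formula; should the direct matching prove too unwieldy, an alternative is to note that $\rho$ is a natural (hence $O(r)$-equivariant) polynomial expression in $P$ and its derivatives, so it must be a universal constant times $\pf(-F^E)$, and then to pin down that constant by testing the already-established identity $\int_M\eta\wedge\rho=\bsE\langle\eta,[Z_\bu]\rangle$ on a single model case, e.g.\ a tautological oriented bundle over a Grassmannian $\Gr^+_r(\bsU)$.
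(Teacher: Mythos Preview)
Your proposal is correct and follows essentially the same route as the paper. Part (i) is identical: the paper constructs the incidence set $\eX=\{(\bu,\bx):\bu(\bx)=0\}$, identifies it with the total space of $K\to M$, shows (Lemma~\ref{lemma: sard}) that a regular value of $\pi_-:\eX\to\bsU$ is precisely a transversal section, and appeals to Sard. For (ii) the paper uses exactly the double-fibration trick you describe, passing through the two coarea formulas~(\ref{caf1}) and~(\ref{caf2}) to reduce to the pointwise identity~(\ref{intfor2}); your fibre-integral $\rho=q_*\pi^*\gamma_\bsU$ and the Kac--Rice disintegration are just a repackaging of the paper's Gelfand--Leray residue computation~(\ref{resid})--(\ref{intresid}).

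The only cosmetic difference is in the Gaussian moment. Where you invoke Wick's theorem to expand $\bsE[\det A(\tau)]$ and match combinatorially with~(\ref{pf2}), the paper phrases this via orthogonal invariants in the style of Weyl's tube formula: it observes that $\mu(\bu_{ij}):=\int\det(\bu_{ij}\bullet\tau)\,\gamma_K(d\tau)$ is an $O(N-r)$-invariant polynomial, hence equals a universal constant $Z$ times the explicit invariant $Q(\bu_{ij})$ (citing \cite[Lemma~9.3.9]{N1}), determines $Z=1/(2^h h!)$ by a one-line test case, and then identifies $Q(\Phi_{ij})$ with the Pfaffian via the Gauss equation~(\ref{re1}). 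This is the same computation as your Wick expansion, just organized differently; your suggested fallback of fixing a universal constant on a model Grassmannian is not needed, since the constant is pinned down directly. The paper also tracks orientations more carefully (Lemma~\ref{orient1}), whereas you correctly rely on the even codimension to absorb the signs.
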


 The proof of the  the integral formula (\ref{intfor})  is  based  on   Gelfand's double fibration trick,   \cite{APF, GGSh}. Its formulation  relies  on  two versions of the coarea formula.    We describe these versions below.
 
 \subsection{The coarea formula}  Suppose that $X,Y$ are \emph{oriented} smooth manifolds of dimensions
 \[
 \dim X=N\geq n=\dim Y.
 \]
 Asume further that  that we are given a smooth map $\pi: X\to Y$. For any  regular value $y\in Y$ of $\pi$ the fiber $X_y:=\pi^{-1}(y)$ is a smooth submanifold of $X$ of codimension $n$ and its conormal bundle $T^*_{X_y}X$  is naturally isomorphic with $\pi^*T^*Y|_{X_y}$ and thus   it has a natural orientation. We orient  $X_y$   using the \emph{fiber-first convention}, i.e.,
 \[
 {\rm orientation}\,(X)={\rm orientation}\,(X_y)\wedge {\rm orientation}\, T^*_{X_y}X.
 \]
  Suppose that $\omega_Y\in \Omega^{n}(Y)$ is a volume form on $Y$, i.e., a nowhere vanishing  top-degree form on $Y$. Fix a smooth function $\rho_Y: Y\to\bR$ and a form $\eta\in \Omega^{N-n}(X)$  such that
  \[
-\infty<  \int_{X_y}\eta<\infty
\]
for any regular value $y$ of $\pi$.     Sard's theorem  implies that   $y$ is a regular  value of $\pi$ for almost all $y\in Y$.   

The first version of the coarea formula  states that the function
\[
Y\ni y\mapsto  \int_{X_y}\eta\in\bR
\]
is  Lebesgue measurable and 
\begin{equation}
\int_Y\left(\int_{X_y}\eta\right)\rho_Y(y)\omega_Y=\int_X \eta \wedge \pi^*(\rho_Y\omega_Y),\;\;\eta\in \Omega^c(X).
\label{caf1}
\end{equation}

For the second version of  the  coarea formula we choose a top degree form $\alpha\in\Omega^{N}(X)$.  If $y_0\in Y$ is  a regular value of $\pi$, then there is an induced  \emph{Gelfand-Leray residue form}
\[
\frac{\alpha}{\pi^*\omega_Y}\in \Omega^{N-n}(X_{y_0}).
\]
It is locally constructed as follows. Fix a point $p_0\in X_y$ and local coordinates  $(x^1, \dotsc,x^N)$ on $X$ in a neighborhood $U$ of  $p_0$  and coordinates $(y^1,\dotsc, y^n)$ on $Y$ in a neighborhood $V$ of  $y_0=\pi(p_0)$ such that, in these coordinates,  the smooth map $\pi$  is linear  and  described by the functions
\[
y^i(x)=x^{N-n+i},\;\;\forall i=1,\dotsc, n.
\]
In the coordinates  $(y^i)$  the volume form $\omega_Y$    has the form
\[
\omega_Y=a(y) dy^1\wedge \cdots \wedge dy^n,
\]
where $a \in C^\infty(V)$ is a nowhere vanishing function. Now choose a form $\beta\in\Omega^{N-n}(U)$ such that
\[
\beta\wedge  a\bigl(\,x^{N-n+1},\dotsc, x^N \,\bigr)dx^{N-n+1}\wedge \cdots \wedge dx^N= \alpha.
\]
Ther restriction of $\beta$ to $X_{y_0}\cap Y$ is an $(N-n)$-form  on $X_{y_0}\cap U$  that is independent of all the choices and it is the Gelfand-Leray residue $\frac{\alpha}{\pi^*\omega_Y}$.  

The second version of the coarea formula that we will need   takes the form
\begin{equation}
\int_X \alpha =\int_Y\left( \int_{X_y}\frac{\alpha}{\pi^*\omega_Y} \right) \omega_Y.
\label{caf2}
\end{equation}

For  an explanation of why the more traditional coarea formula, \cite[Thm. 3.2.11]{Fed} or  \cite[Thm. 5.3.9]{KP},   implies (\ref{caf1}) and (\ref{caf2}) we refer to \cite[Cor. 2.11]{Ncoarea}.

\subsection{The  double fibration trick}  Consider the incidence set
 \[
 \eX:=\bigl\{(\bu, \bx )\in\bsU\times  M;\;\;\bu(x)=0\,\bigr\}.
 \]
 It comes equipped with two natural projections
 \[
 \bsU\stackrel{\pi_-}{\longleftarrow}\eX\stackrel{\pi_+}{\longrightarrow} M,
 \]
 \[
 \pi_+(\bu,\bx)=\bx,\;\;\pi_-(\bu,\bx)=\bu,\;\;\forall (\bx,\bu)\in\eX.
 \]
For any subset $A\subset M$ and $B\subset \bsU$ we set
\[
\eX^+_A:=\pi_+^{-1}(A),\;\;\eX^-_B:=\pi_-^{-1}(B).
\]

 \begin{lemma} (a) The incidence set  $\eX$ has a natural structure of   smooth manifold diffeomorphic to the total space of the vector bundle $K\to M$.
 
 \noindent (b)   If $\bu\neq 0$ is a   regular value of $\pi_-$, then $\bu\pitchfork 0$.
 \label{lemma: sard}
 \end{lemma}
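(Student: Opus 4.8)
The statement has two parts: (a) identify $\eX$ smoothly with the total space of $K\to M$, and (b) show that regular values of $\pi_-$ are automatically transversal sections. For part (a), the plan is to exhibit the diffeomorphism explicitly. A point of $\eX$ is a pair $(\bu,\bx)$ with $\ev_\bx\bu = P_\bx\bu = 0$, i.e., $\bu\in\ker P_\bx = K_\bx$. So the obvious map is
\[
\Theta:\eX\to K,\qquad \Theta(\bu,\bx)=(\bx,\bu)\in K_\bx,
\]
with inverse sending $(\bx,\bv)$ with $\bv\in K_\bx$ to $(\bv,\bx)\in\bsU\times M$. I would first check that $\eX$ is a submanifold of $\bsU\times M$: locally over a trivializing neighborhood the condition $P_\bx\bu=0$ cuts out a subbundle of the trivial bundle $\underline{\bsU}_M$ (the ampleness condition (\ref{ample0}) guarantees $P_\bx$ has constant rank $r$, hence $K=\ker P$ is a genuine smooth subbundle of rank $N-r$), and $\eX$ is exactly the total space of that subbundle viewed inside $\bsU\times M$. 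Then $\Theta$ is manifestly smooth with smooth inverse, so it is a diffeomorphism, and in particular $\dim\eX = m + (N-r)$.

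\textbf{Part (b).} Here I would argue as follows. Suppose $\bu_0\neq 0$ is a regular value of $\pi_-:\eX\to\bsU$; I must show that the section $S^E_{\bu_0}$ of $E$ is transversal to the zero section, equivalently that the adjunction morphism $\fa_{\bu_0}$ is an isomorphism along $Z_{\bu_0}$, equivalently that at each $\bx_0\in Z_{\bu_0}$ the differential
\[
(\nabla^E_\bullet \bu_0)_{\bx_0}:T_{\bx_0}M\to E_{\bx_0}
\]
is surjective. Pick $\bx_0\in Z_{\bu_0}=\pi_+(\pi_-^{-1}(\bu_0))$, so $(\bu_0,\bx_0)\in\eX$. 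The tangent space $T_{(\bu_0,\bx_0)}\eX\subset\bsU\oplus T_{\bx_0}M$ consists of pairs $(\dot\bu,\dot\bx)$ satisfying the linearization of the constraint $P_\bx\bu=0$ at $(\bu_0,\bx_0)$; differentiating, this reads
\[
P_{\bx_0}\dot\bu + (\dot{P})(\dot\bx)\bu_0 = 0,
\]
where $(\dot P)(\dot\bx)$ is the derivative of $\bx\mapsto P_\bx$ in the direction $\dot\bx$. The regularity hypothesis says $d\pi_-:(\dot\bu,\dot\bx)\mapsto\dot\bu$ is onto $\bsU$. I would combine these: given any $\be\in E_{\bx_0}$, lift it to some $\bu_1\in\bsU$ with $P_{\bx_0}\bu_1=\be$ (possible by ampleness); then $(\bu_1,0)$ need not be tangent to $\eX$, but by surjectivity of $d\pi_-$ there is a genuine tangent vector $(\bu_1,\dot\bx_1)\in T_{(\bu_0,\bx_0)}\eX$ with the same $\bsU$-component. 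Feeding $\dot\bx_1$ into the constraint equation and comparing with the formula for $\nabla^E_{\dot\bx_1}\bu_0 = P_{\bx_0}\,d(P_\bx\bu_0)(\dot\bx_1) = P_{\bx_0}(\dot P)(\dot\bx_1)\bu_0$, I can read off that $\nabla^E_{\dot\bx_1}\bu_0 = -P_{\bx_0}\bu_1 = -\be$ modulo the image already covered. Tracking the bookkeeping carefully shows $(\nabla^E_\bullet\bu_0)_{\bx_0}$ hits all of $E_{\bx_0}$, which is the desired transversality. Finally, Sard's theorem guarantees almost every $\bu\in\bsU$ is a regular value of $\pi_-$, and since $\{0\}$ has measure zero, almost every $\bu$ is a nonzero regular value, hence transversal — establishing part (i) of Theorem \ref{th: main_int} once the lemma is proved.

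\textbf{Main obstacle.} The bookkeeping in part (b) — correctly relating the linearized incidence condition $P_{\bx_0}\dot\bu + (\dot P)(\dot\bx)\bu_0 = 0$ to the covariant derivative $\nabla^E_\bullet\bu_0 = P\,dP\,\bu_0$ and extracting surjectivity — is where I expect to spend the most care; the identification in (a) is essentially formal once constant rank of $P_\bx$ is noted. A clean way to organize (b) is to note $dP_\bx\cdot P_\bx = Q_\bx\,(\dot P)$ type identities coming from $P^2=P$, so that $(\dot P)\bu_0$ (with $\bu_0\in K_{\bx_0}$, i.e. $P_{\bx_0}\bu_0=0$) decomposes neatly into its $E_{\bx_0}$ and $K_{\bx_0}$ parts, and the $E_{\bx_0}$-part is exactly $\nabla^E\bu_0$; this makes the surjectivity argument transparent.
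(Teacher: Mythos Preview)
Your argument is correct. Part (a) is identical to the paper's: both observe that $(\bu,\bx)\in\eX$ iff $\bu\in K_\bx$, which identifies $\eX$ with the total space of $K$.

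For part (b) you take a genuinely more invariant route than the paper. The paper fixes normal coordinates at $\bx_0$, chooses a $\nabla^E$-synchronous orthonormal moving frame $(\be_a(x))$ adapted to the splitting $\bsU=E_\bx\oplus K_\bx$, introduces coordinates $(\tau,x)$ on $\eX^+_\eO$, and then computes in these coordinates that $\fa_{\bu_0}|_{\bx_0} = (\nabla^E S^E_{\bu_0})|_{\bx_0} = -P\,d\pi_-|_{(\tau_0,0)}$, whence surjectivity of $d\pi_-$ implies surjectivity of $\fa_{\bu_0}$. Your argument reaches the same identity coordinate-free: the linearized incidence constraint $P_{\bx_0}\dot\bu + (\dot P)(\dot\bx)\bu_0 = 0$ combined with the observation (from $P^2=P$ and $P_{\bx_0}\bu_0=0$) that $(\dot P)(\dot\bx)\bu_0 = P_{\bx_0}(\dot P)(\dot\bx)\bu_0 = \nabla^E_{\dot\bx}S^E_{\bu_0}$ gives directly $\nabla^E_{\dot\bx_1}S^E_{\bu_0} = -P_{\bx_0}\bu_1 = -\be$ for any prescribed $\be\in E_{\bx_0}$. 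Your hedging (``modulo the image already covered'', ``tracking the bookkeeping carefully'') is unnecessary: the computation is clean and complete as you outlined it in the final paragraph.

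What the paper's approach buys is not a better proof of the lemma but the local apparatus (the frame $(\be_a)$, the functions $y_{ab}(x)$, the coordinates $(\tau,x)$) that is reused heavily in the subsequent computation of the Gelfand--Leray residue and the identification with the Pfaffian. Your approach is tidier if the lemma is the sole goal.
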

 
 \begin{proof}  (a) Note that
 \[
 (\bx,\bu)\in \eX\Llra P_{\bx}\bu=\ev_x \bu=0 \Llra \bu=K_\bx.
 \]
 This proves the first claim.

(b)  Suppose  that $\bu_0\in \bsU\setminus 0$ is a regular value of $\beta$. We will show that for any $\bx_0\in M$ such that $\bu_0(\bx_0)=0$, the adjunction map $\fa_{\bu_0}$  defines an isomorphisn
\[
(T_{Z_{\bu_0}}M)_{\bx_0}\to E_{\bx_0}.
\]
Fix a small open coordinate  neighborhood  $\eO\subset M$ of  $\bx_0$ in $M$ with locall coordinates $(x^1,\dotsc, x^m)$.  We assume that  via these coordinates $\eO$ is identified with a ball $B\subset \bR^m$  centered at $0$ and $\bx_0$ is identified with the center of the ball, $x^i(\bx_0)$, $\forall i=1,\dotsc, m$.

 Both bundles $E$ and $K$ are trivializable over  $B$.  We can therefore find smooth maps
 \[
 \be_1,\dotsc, \be_N:  \eO\to \bsU
 \]
 such that the following hold.
 
 \begin{gather}
 \mbox{For any $\bx\in \eO$ the collection  $\{\be_a(\bx)\}_{1\leq a\leq N}$  is an orthonormal  basis of $\bsU$.}\\
\mbox{ $\spa\bigl\{\,\be_i(\bx),\;\; 1\leq i\leq r\,\bigr\}=E_\bx$, $\forall \bx\in\eO$.}\\
 \mbox{$\spa\bigl\{\,\be_\alpha(\bx),\;\; r< \alpha\leq N\,\bigr\}=K_\bx$,  $\forall \bx\in\eO$.}\label{26}\\
 \mbox{$\nabla^E \be_i(\bx_0)=0,\;\;\forall i=1,\dotsc, r$,} \label{frame}
 \end{gather}

 We will use the following  conventions frequently encountered in integral geometry.
 
 \begin{itemize}
 
  \item We will use the Latin letters $a,b,c$ to denote indices in the range $1,\dotsc, N$.

 \item We will use the Latin  letters $i,j,k,\ell$ to  denote indices in the range $1,\dotsc, r={\rm rank}\,(E)$.
 
 \item We will use the Greek letters $\alpha,\beta,\gamma$ to denote indices in the range $r+1,\dotsc, N$.

 \end{itemize}
 
 The map
 \[
 \bR^N\times B\ni (t, x)\mapsto \left(\sum_{a}t^a\be_a(x), x\right)\in \bsU\times \eO
 \]
 is a diffeomorphism. The  set $\eX^+_{\eO}\subset \underline{\bsU}_\eO$  can be  identified with the set 
\begin{equation}\label{ttaux}
\bigl\{ ( t^1,\dotsc, t^N, \underbrace{x^1,\dotsc, x^m}_x\,)\in \bR^N\times \bR^m;\;\; x\in B,\;\;t^j=0,\;\;\forall j\leq r\,\bigr\}.
\end{equation}
We write
\begin{equation}\label{ttau}
t:=(t^i)_{1\leq i\leq r},\;\;\tau:=(t^{\alpha})_{r<\alpha \leq N},\;\;\tilde{t}:=(t,\tau).
\end{equation}
Thus the pair $(\tau,x)$ defines local coordinates on $\eX^+_\eO$. In these coordinates the pair $(\bu_0,\bx_0)$ is identified with a pair $(\tau_0, 0)\in\bR^{N-r}\times \bR^m$,
\[
\tau_0=(\tau_0^{r+1},\dotsc,\tau_0^N). 
\]
Moreover,  the   map $\pi_-$ is  given by
\begin{equation}\label{pi-}
(\tau, x)\mapsto\pi_-(\tau, x)=\sum_{\alpha}t^\alpha\be_\alpha(x)\in\bsU.
\end{equation}
We set
\[
u^a(x):=\bigl(\, \bu_0, \be_a(x)\,\bigr)_\bsU,\;\;\forall a=1,\dotsc, N, 
\]
so that
\begin{equation}
\bu_0=\sum_a u^a(x)\be_a(x),\;\;\forall x\in B.
\label{u0}
\end{equation}
Above, we think of $\bu_0$ as  a constant section of the trivial bundle $\underline{\bsU}_M$. The functions $u^a(x)$ are the coordinates of this section in the moving frame $(\be_a(x))$. Note that
\begin{equation}
S_{\bu_0}^E(x)=P_x\bu_0=\sum_i u^i(x)\be_i(x).
\label{seu0}
\end{equation}
The fiber $\eX^{-}_{\bu_0}=\pi_-^{-1}(\bu_0)$   is    described in the coordinates $(\tau,x)$ by the equalities
\[
u^i(x)=0,\;\;t^\alpha=u^\alpha(x),\;\;\forall 1\leq i\leq r, \;\;\forall \alpha>r.
\]
\begin{remark}\label{rem: Q} Denote by $Q$ the natural orthogonal projection $Q:\underline{\bsU}_M\to K=\ker P$. From the above equalities and (\ref{26}) we deduce  that the  section
\begin{equation}
Q\bu_0: M\to K,\;\;x\mapsto Q_x\bu_0,
\label{Z}
\end{equation}
induces a homeomorphism  from $Z_{\bu_0}$ to the  fiber $\eX^{-}_{\bu_0}$.  This homeomorphism would be a diffeomorphism if $Z_{\bu_0}$ were cut out transversally by the the equations $u^i(x)=0$, $1\leq i\leq r$.\qed
\end{remark}

The differential of $\pi_-$   at $(\tau_0,0)\in\eX^-_{\bu_0}$  is
\[
d\pi_-|_{\tau_0,0}= \sum_\alpha dt^\alpha\be_\alpha|_{\tau=\tau_0} +\sum_\alpha \tau_0^\alpha  d\be_\alpha|_{x=0}.
\]
Since $\bu_0$ is a regular value of $\pi_-$,  the differential $d\pi_-$ at any point in $\eX^-_{\bu_0}$  is surjective.  In particular, the  induced linear map
\[
Pd\pi_-|_{\tau_0,0} =\sum_\alpha \tau_0^\alpha P d\be_\alpha(x)|_{\bx=0}: T_{\bx_0}M\to E_{\bx_0}
\]
must be surjective.  From (\ref{seu0}) we deduce that 
\[
\nabla^ES^E_{\bu_0}= P d\left(\,\sum_i u^i(x)\be_i(x)\,\right)= \sum_i  du^i \be_i+  \sum_i u^i P d\be_i 
\]
At $\bx_0$ we have $u^i(\bx_0)=0$  and we conclude that
\[
\bigl(\, \nabla^ES^E_{\bu_0}\,\bigr)|_{\bx_0}=\sum_i  du^i \be_i.
\]
On the other hand,  we deduce from (\ref{u0}) that
\[
0=d\left(\sum_a u^a(x)\be_a(x)\right)\Ra Pd\left(\sum_a u^a(x)\be_a(x)\right)=0
\]
\[
\Ra \sum_i  du^i \be_i+  \sum_i u^i P d\be_i=-\sum_\alpha u^\alpha P d\be_\alpha.
\]
At $\bx_0$  we have $u^i(\bx_0)=0$, $u^\alpha(\bx_0)=\tau_0^\alpha$ and we deduce
\[
\bigl(\, \nabla^E S_{\bu_0}^E\,\bigr)|_{\bx_0}=\sum_i  du^i \be_i= -\sum_\alpha \tau_0^\alpha P d\be_\alpha(x)|_{x=0}=-Pd\pi_-|_{\tau_0,0}.
\]
This proves that the  adjunction map 
\begin{equation}
\fa_{\bu_0}|_{\bx_0}=\bigl(\, \nabla^E S_{\bu_0}^E\,\bigr)|_{\bx_0}=-Pd\pi_-|_{\tau_0,0}
:  T_{\bx_0}M\to E_{\bx_0}
\label{adj}
\end{equation}
is surjective.  Since
\[
 \sum_i  du^i \be_i=-Pd\pi_-|_{\tau_0,0}
 \]
 we deduce that near $\bx_0$ the zero set $Z_{\bu_0}$ is cut out transversally  by the equations $u^i(x)=0$, $i=1,\dotsc, r$.
 \end{proof}
 
 Observe that it suffices to prove  (\ref{intfor})   only for  forms $\eta$ supported in some coordinate  neighborhood $\eO$ of some point $\bx_0 \in M$.  We continue to use the notations and the conventions introduced in the proof of Lemma \ref{lemma: sard}.    We have a double fibration    
 \[
 \bsU\stackrel{\pi_-}{\longleftarrow}\eX|_{\eO}\stackrel{\pi_+}{\longrightarrow} \eO.
 \]
Assume that  the volume form 
\[
\omega_\eO=dx^1\wedge\cdots \wedge dx^m\in\Omega^m(\eO)
\]
defines the given orientation of $M$. Clearly, the  equality (\ref{intfor}) is linear in  $\eta$ so it suffices to  prove   it in the special case when
\[
\eta=f_M dx^{r+1}\wedge \cdots \wedge dx^m,\;\;f_M\in C_0^\infty(\eO).
\]
 We fix an orientation on $\bsU$  and consider the volume form
 \[
 \omega_\bsU= \rho_\bsU dV_\bsU,\;\;\rho_\bsU=\frac{1}{(2\pi)^{\frac{N}{2}}}e^{-\frac{|\bu|^2}{2}},
 \]
  where $dV_\bsU$ denotes the Euclidean volume form on $\bsU$   determined by the given orientation.   
  
  The  orientation on $\bsU$  defines an orientation on the trivial bundle $\underline{\bsU}_M$. Coupled with the orientation  on $E$  it induces  an orientation on the vector  bundle  $K$ uniquely determined  by the requirements
  \[
  {\rm orientation}\,(\underline{\bsU}_M)={\rm orientation}\,(E)\wedge {\rm orientation}\,(K)={\rm orientation}\,(K)\wedge {\rm orientation}\,(E).
  \]
  Finally,  the  orientation on  $K$ induces an orientation on the total space $\eX$ via  the fiber-first  convention. We will refer to  this orientation as the \emph{natural orientation} on $\eX$.
  
  For any  regular  value $\bu_0$ of $\pi_-$, the fiber $\eX^-_{\bu_0}$ caries an orientation  given by the fiber-first convention applied to the fibration $\pi_-:\eX\to\bsU$.
  
  \begin{lemma} The natural  orientation  of $\eX|_\eO$   has the property that for any   regular value $\bu_0$ of  $\pi_-$,  the natural diffeomorphism
  \[
 Q\bu_0: Z_{\bu_0}  \to \eX^-_{\bu_0} 
  \]
  defined in Remark \ref{rem: Q}  has degree  $(-1)^{Nm}$ and thus changes the orientation by the factor $(-1)^{Nm}$.
  \label{orient1}
  \end{lemma}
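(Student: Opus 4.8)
The plan is to compute the local degree of the diffeomorphism $Q\bu_0\colon Z_{\bu_0}\to\eX^-_{\bu_0}$ at an arbitrary point and check that it is constantly equal to $(-1)^{Nm}$; that $Q\bu_0$ is indeed a diffeomorphism follows from Remark \ref{rem: Q} together with the transversality established in the proof of Lemma \ref{lemma: sard}(b). Fix $\bx_0\in Z_{\bu_0}$ with corresponding point $(\tau_0,0)\in\eX^-_{\bu_0}$ and reuse the local data from the proof of Lemma \ref{lemma: sard}: coordinates $(x^1,\dots,x^m)$ on $\eO\ni\bx_0$, the induced coordinates $(\tau,x)=(t^{r+1},\dots,t^N,x^1,\dots,x^m)$ on $\eX^+_\eO$, and the adapted orthonormal frame $\be_1,\dots,\be_N$ with $\spa\{\be_i\}_{i\le r}=E$, $\spa\{\be_\alpha\}_{\alpha>r}=K$, $\nabla^E\be_i(\bx_0)=0$. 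First I would normalize the choices: take $(x^i)$ orientation-compatible for $M$, and pick the frame so that $(\be_1,\dots,\be_r)$ is a positively oriented orthonormal frame of $E$ and $(\be_{r+1},\dots,\be_N)$ a positively oriented orthonormal frame of $K$. Since the orientation of $K$ was \emph{defined} by $\Ori(\underline{\bsU}_M)=\Ori(E)\wedge\Ori(K)$, and since $r=2h$ is even, this can be arranged so that $(\be_1,\dots,\be_N)$ is simultaneously positively oriented for $\bsU$.

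With these normalizations the natural orientation of $\eX$ at $(\tau_0,0)$ is $\partial_{t^{r+1}}\wedge\cdots\wedge\partial_{t^N}\wedge\partial_{x^1}\wedge\cdots\wedge\partial_{x^m}$ (fiber $K$ first, then base $M$). Set $A:=(\nabla^ES^E_{\bu_0})|_{\bx_0}=-Pd\pi_-|_{\tau_0,0}\colon T_{\bx_0}M\to E_{\bx_0}$, which by (\ref{adj}) is onto with $\ker A=T_{\bx_0}Z_{\bu_0}$. Choose a basis $v_1,\dots,v_{m-r}$ of $\ker A$ and vectors $w_1,\dots,w_r$ with $Aw_k=\be_k(\bx_0)$, chosen so that $(v_1,\dots,v_{m-r},w_1,\dots,w_r)$ is positively oriented in $T_{\bx_0}M$; by the definition of the orientation of $Z_{\bu_0}$ (normal bundle oriented by the adjunction morphism and $\Ori(E)$) one then has $\Ori(Z_{\bu_0})=[v_1,\dots,v_{m-r}]$. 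Denoting by $\tilde{v}_p,\tilde{w}_k\in T_{(\tau_0,0)}\eX$ the horizontal lifts of $v_p,w_k$, a short computation from $\bu_0=\sum_a u^a(x)\be_a(x)$ and $Pd\pi_-(\partial_{x^j})=-A\partial_{x^j}$ yields
\[
d(Q\bu_0)|_{\bx_0}(v_p)=\tilde{v}_p-\sum_\alpha\nu_p^\alpha\,\partial_{t^\alpha}=:\xi_p,\qquad \sum_\alpha\nu_p^\alpha\be_\alpha(\bx_0)=\sum_j v_p^j\,Q_{\bx_0}\!\bigl(d\pi_-(\partial_{x^j})\bigr),
\]
and one checks $d\pi_-(\xi_p)=0$, so $\xi_1,\dots,\xi_{m-r}$ is a basis of $T_{(\tau_0,0)}\eX^-_{\bu_0}$.

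It remains to compare $[\xi_1,\dots,\xi_{m-r}]$ with $\Ori(\eX^-_{\bu_0})$. Unwinding the fiber-first convention for $\pi_-$ (whose conormal bundle is $\pi_-^*T^*\bsU$, oriented by the dual of $\Ori(\bsU)$), the latter is characterized by $\Ori(\eX)=\Ori(\eX^-_{\bu_0})\wedge(d\pi_-|_C)^{-1}\Ori(\bsU)$ for any complement $C$ of $T_{(\tau_0,0)}\eX^-_{\bu_0}$ in $T_{(\tau_0,0)}\eX$. I would take $C=\spa\{\partial_{t^{r+1}},\dots,\partial_{t^N},\tilde{w}_1,\dots,\tilde{w}_r\}$; one checks this is a complement and that $d\pi_-$ sends its listed basis to $\be_{r+1}(\bx_0),\dots,\be_N(\bx_0),-\be_1(\bx_0)+\mu_1,\dots,-\be_r(\bx_0)+\mu_r$ with $\mu_k\in K_{\bx_0}$, which — because $r$ is even, so that the $\mu_k$ corrections and the $(-1)^r$, $(-1)^{r(N-r)}$ reorderings all drop out — is a positively oriented frame of $\bsU$. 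A Grassmann-algebra computation then gives
\[
\xi_1\wedge\cdots\wedge\xi_{m-r}\wedge\partial_{t^{r+1}}\wedge\cdots\wedge\partial_{t^N}\wedge\tilde{w}_1\wedge\cdots\wedge\tilde{w}_r=(-1)^{(m-r)(N-r)}(\text{positive factor})\cdot\partial_{t^{r+1}}\wedge\cdots\wedge\partial_{t^N}\wedge\partial_{x^1}\wedge\cdots\wedge\partial_{x^m},
\]
the sign arising solely from commuting the $N-r$ vectors $\partial_{t^\alpha}$ past the $(m-r)$-form $\tilde{v}_1\wedge\cdots\wedge\tilde{v}_{m-r}$ (the $\tilde{w}_k$ together with the $\tilde{v}_p$ form a positive frame of $T_{\bx_0}M$). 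Hence $[\xi_1,\dots,\xi_{m-r}]=(-1)^{(m-r)(N-r)}\Ori(\eX^-_{\bu_0})$, so $d(Q\bu_0)|_{\bx_0}$ has local degree $(-1)^{(m-r)(N-r)}$; since $r=2h$ is even, $(m-r)(N-r)\equiv Nm\pmod 2$, and since $\bx_0$ was arbitrary, $Q\bu_0$ has degree $(-1)^{Nm}$.

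The main obstacle is not a single hard argument but the disciplined simultaneous tracking of several orientation conventions — the fiber-first orientation of $\eX$ as the total space of $K$, the conormal/fiber-first rule for $\pi_-$ involving the dual orientation of $\bsU$, and the adjunction-induced co-orientation of $Z_{\bu_0}$ inside $M$ — together with pinning down $d(Q\bu_0)$ in the coordinates $(\tau,x)$, in which $Z_{\bu_0}$ becomes a graph over its tangent space only after using the identity obtained by differentiating $\bu_0=\sum_a u^a(x)\be_a(x)$. The evenness $r=2h$ is used repeatedly and essentially: it is exactly what forces every intermediate sign to cancel, so that the numerous a priori possible factors collapse to the single $(-1)^{Nm}$.
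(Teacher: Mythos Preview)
Your proof is correct and follows essentially the same strategy as the paper's: both compare the orientation of $\eX$ computed via the bundle structure $K\to M$ (fiber-first) with the orientation computed via the fibration $\pi_-:\eX\to\bsU$ (fiber-first), using the adjunction isomorphism to identify the normal bundle of $Z_{\bu_0}$ with $E$. The paper carries this out at the level of orientation classes, writing
\[
\Ori(\eX)=\Ori(K)\wedge\Ori(Z_{\bu_0})\wedge\Ori(E)=(-1)^{Nm}\,\Ori(Z_{\bu_0})\wedge\Ori(E)\wedge\Ori(K)
\]
and comparing with $\Ori(\eX)=\Ori(\eX^-_{\bu_0})\wedge\Ori(E)\wedge\Ori(K)$, while you unwind the same decomposition with explicit bases $(v_p,w_k,\partial_{t^\alpha})$ and a Grassmann-algebra computation. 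Your choice of the complement $C=\spa\{\partial_{t^\alpha},\tilde w_k\}$ and verification that $d\pi_-|_C$ is orientation-preserving onto $\bsU$ is exactly the explicit incarnation of the paper's observation that $d\pi_-$ restricted to the normal of $\eX^-_{\bu_0}$ in $\Psi(M)$ is, up to an immaterial sign, the adjunction map. The intermediate sign $(-1)^{(m-r)(N-r)}$ you obtain reduces to $(-1)^{Nm}$ for the same reason the paper's $(-1)^{(N-r)m}$ does: $r$ is even.
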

  
  \begin{proof} The fiber  $\eX^-_{\bu_0}$ is  the image of $Z_{\bu_0}$   via  the section $\Psi=Q\bu_0$  of $\eX\to M$.    The map $\Psi$  identifies the normal bundle  $T_{Z_{\bu_0}}M$ of $Z_{\bu_0}$ in $M$  with the normal bundle $T_{\eX^-_{\bu_0}}\Psi(M)$ of $\eX^-_{\bu_0}$ in $\Psi(M)$. 
  
  The equality (\ref{adj}) shows that  the restriction of $d\pi_-$ to $T_{\eX^-_{\bu_0}}\Psi(\eO)$  can be identified up to a sign with the opposite of the adjunction map. This sign is not important  for orientations purposes since the bundles involved   have even rank.  Now observe that at $(\bu_0,\bx_0)\in \eX$ we have
  \[
  {\rm orientation}\, (\eX)= {\rm orientation}\,(K_{\bx_0})\wedge  {\rm orientation}\,\Psi(M)
  \]
  \[
  = {\rm orientation}\,(K_{\bx_0})\wedge  {\rm orientation}\,(Z_{\bu_0})\wedge  {\rm orientation}\,(E_{\bx_0})
  \]
  \[
  =(-1)^{Nm} {\rm orientation}\,(Z_{\bu_0}) \wedge  {\rm orientation}\,(E_{\bx_0})\wedge  {\rm orientation}\,(K_{\bx_0}).
  \]
  On the other hand
  \[
   {\rm orientation}\,(\eX)= {\rm orientation}\,(\eX^-_{\bu_0})\wedge  {\rm orientation}\,\bsU
   \]
   \[
   = {\rm orientation}\,(\eX^-_{\bu_0})\wedge  {\rm orientation}\,(E_{\bx_0})\wedge  {\rm orientation}\,(K_{\bx_0}).
   \]
    \end{proof}

  The first coarea formula (\ref{caf1})  coupled with  Lemma \ref{orient1} imply  that
 
  \[
  \int_\bsU \left(\int_{Z_u}\eta\right) \rho_\bsU dV_\bsU=(-1)^{Nm}  \int_\bsU \left(\int_{\eX^-_\bu}\eta\right) \rho_\bsU dV_\bsU=(-1)^{Nm}\int_{\eX^+_{\eO}} \pi_+^*\eta \wedge \pi^*_-\omega_\bsU.
  \]
  Hence
   \begin{equation}
    \int_\bsU \left(\int_{Z_u}\eta\right) \rho_\bsU dV_\bsU= \int_{\eX^+_{\eO}} \pi^*_-\omega_\bsU\wedge  \pi_+^*\eta. 
  \label{caf3}
  \end{equation}
  Recalling that $\pi_+^{-1}(x)=K_x$, $\forall x\in \eO$, we deduce from (\ref{caf3}) and the second coarea formula   (\ref{caf2})  that 
  \begin{equation}
  \int_\bsU \left(\int_{Z_u}\eta\right) \rho_\bsU dV_\bsU=\int_{\eO} \left( \int_{K_x}\frac{ \pi^*_-\omega_\bsU\wedge \pi_+^*\eta }{\pi_+^*\omega_\eO}\right)\omega_\eO.
  \label{intfor1}
  \end{equation}
This is  Gelfand's double fibration trick. To prove  (\ref{intfor}) we need to show that
 \begin{equation}
 \left( \int_{K_x}\frac{\pi^*_-\omega_\bsU\wedge \pi_+^*\eta }{\pi_+^*\omega_\eO}\right)\omega_\eO=\frac{1}{(2\pi)^h} \eta \wedge \pf\bigl(-F^E\bigr)=\frac{1}{(2\pi)^h} \pf\bigl(-F^E\bigr)\wedge \eta \;\;\mbox{on} \;\;\eO.
 \label{intfor2}
 \end{equation}

\subsection{Proof of (\ref{intfor2})} Suppose  that $\bigl(\,\be_a(0)\,\bigr)_{1\leq a\leq N}$ is a positively oriented basis of $\bsU$ and $(\,\be_i(0)\,)_{1\leq i\leq r}$ is a positively oriented basis of $E_{\bx_0}$.    We set
 \[
 y_{ab}(x):=\bigl(\,\be_a(0), \be_b(x)\,\bigr)_\bsU,\;\;\forall 1\leq a,b\leq N.
 \]
 The  $N\times N$ matrix $Y(x)=(y_{ab}(x))$ is orthogonal and $Y(0)=1$.   Moreover
 \begin{equation}
 \be_a(x)=\sum_b y_{ba}(x)\be_b(0),\;\;\be_a(0)=\sum_b y_{ab}(x)\be_b(x),\;\;\forall a.
 \label{decomp}
 \end{equation}
We deduce 
\[
P_{x}\be_a(0)= \sum_iy_{ai}(x)\be_i(x)=\sum_{i,b}y_{ai}(x)y_{bi}(x)\be_b(0).
\]
Hence
\[
\nabla^E \be_j(x)= P_xd\sum_by_{bj}(x)\be_b(0)=\sum_b dy_{bj}(x)P\be_b(0)=\sum_{i,b}y_{bi}(x) dy_{bj}(x)\be_i(x).
\]
Thus, in the local orthonormal frame $(\,\be_i(x)\,)$ the connection $\nabla^E$ is described by the  matrix-valued $1$-form
\[
\Gamma=(\Gamma_{ij}(x))_{1\leq i,j\leq p},\;\;\Gamma_{ij}(x)=\sum_{b}y_{bi}(x) \wedge dy_{bj}(x).
\]
The curvature  of $\nabla^E$ is $F^E=d\Gamma +\Gamma\wedge \Gamma$. Note that
\[
d\Gamma_{ij}(x)= \sum_{b}d y_{bi}(x) \wedge dy_{bj}(x).
\]
At $\bx_0$, the constraint (\ref{frame}) on the frame $\be_i(x)$ implies that  $\nabla^E\be_j|_{\bx_0}=0$, $\forall j$. Thus
\begin{equation}
0=\Gamma_{ij}(\bx_0)=\sum_{b}y_{bi}(0) dy_{bj}(0)=\sum_{b}\delta_{bi} dy_{bj}(0)=dy_{ij}(0),\;\;\forall i,j.
\label{gammasync}
\end{equation}
Hence 
\[
F^E|_{\bx_0}= d\Gamma= (F_{ij})_{1\leq i,j\leq p},
\]
\[
 F_{ij}=\sum_{b}d y_{bi}(0) \wedge dy_{bj}(0)=\sum_\beta d y_{\beta i}(0) \wedge dy_{\beta j}(0)\in \Lambda^2 T^*_{\bx_0} M.
\]
On the other hand, the  $N\times N$ Maurer-Cartan matrix $Y^{-1}(x)dY(x)$ is skew-symmetric  for any $x$.  At $x=0$  we have $Y(0)=1$  and we deduce 
\[
dy_{\beta i}(0)=-dy_{i\beta}(0),\;\;\forall i,\beta.
\]
We conclude that
\begin{equation}
F^E|_{\bx_0}= d\Gamma= (F_{ij})_{1\leq i,j\leq r},\;\;  F_{ij}=\sum_\beta d y_{i \beta }(0) \wedge dy_{j \beta}(0).
\label{re}
\end{equation}
Define 
\[
 y_a:\bsU\to\bR\;\;, \;\;y_a(\bu)=\bigl(\,\bu,\be_a(0)\,\bigr)_\bsU,\;\;1\leq a\leq N.
 \]
 The Euclidean volume form on $\bsU$ is  then
 \[
 dV_{\bsU}=dy_1\wedge \cdots \wedge dy_N.
 \]
 Recall that $(\tau,x^1,\dotsc, x^m)$ are coordinates on $\eX_\eO^+$; see (\ref{ttaux}) and  (\ref{ttau}). Using (\ref{pi-}) we deduce that
 \[
 y_a\bigl(\,\pi_-(\tau, x^1,\dotsc, x^m )\,\bigr)= y_a\left(\sum_\alpha t^\alpha \be_\alpha(x)\,\right)= \sum_\alpha t^\alpha\bigl(\, \be_a(0),\, \be_\alpha(x)\,\,\bigr)_\bsU=\sum_\alpha t^\alpha y_{a\alpha}(x).
 \]
We set
 \[
 \xi_a(x)=\xi_a(\tau,x):=\sum_\alpha t^\alpha y_{a\alpha}(x),
 \]
 so that
 \[
 \pi_-(\tau,x)=\sum_a\xi_a(x) \be_A(0),
 \]
and
 \[
 \pi_-^*dV_{\bsU}=d\xi_1\wedge \cdots \wedge d\xi_N.
 \]
We view this as a form on the space $\bR^{N-r}\times \eO$ with coordinates $(\tau,x)$. We have
  \[
  d\xi_a= \sum_\alpha dt^\alpha y_{a\alpha}+\sum_\alpha t^\alpha dy_{a\alpha} (x).
  \]
  Observe that at $(\tau_0,0)$ we have
  \[
  y_{ab}(0)=\delta_{ab},\;\;t^\alpha=\tau_0^\alpha,
  \]
  so 
  \[
  d\xi_a(0):=d\xi_a|_{x=0}= \sum_\alpha\delta^a_\alpha dt^\alpha + \sum_\alpha \tau_0^\alpha dy^a_\alpha(0).
  \]
  Hence
  \[
  d\xi_i(0)=\sum_\alpha \tau_0^\alpha dy_{i\alpha}(0),\;\;d\xi_\beta= dt^\beta +\sum_\alpha\tau_0^\alpha dy_{\beta\alpha}(0),
  \]
  so that
  \[
 \pi_-^*\omega_\bsU= \frac{1}{(2\pi)^{\frac{N}{2}} }e^{-\frac{|\tau|^2}{2}} d\xi_1\wedge \cdots d\xi_N.
 \]
Now observe that 
  \[
  d\xi_1\wedge \cdots d\xi_N=\underbrace{(dt^{r+1}\wedge \cdots \wedge dt^N)}_{=:d\tau}\,\underbrace{\bigwedge_i \sum_{\alpha_i} \tau_0^{\alpha_i} dy_{i\alpha_i}(0)}_{=:\Omega(\tau_0)}\,+\, \eL,
  \]
 where $\eL$   incorporates all  the other terms  that have degrees $<N-r$ in the $dt^\alpha$ variables, and
 \[
 \Omega(\tau_0)\in \Lambda^r T^*_{\bx_0}M.
 \]
 Since the  terms collected in $\eL$ have degrees $>r$ in the variables $(x^1,\dotsc, x^m)$ we deduce
 \[
 d\xi_1\wedge \cdots d\xi_N\wedge \pi_+^*\eta=f_Md\tau\wedge \Omega(\tau_0)\wedge dx^{r+1}\wedge\cdots \wedge dx^m.
 \]
 Denote by $\Omega(\tau_0)_{1,\dotsc,r}$ the coefficient  of $dx^1\wedge \cdots \wedge dx^r$ in the decomposition of $\Omega(\tau_0)$  with respect to the basis $\{\,dx^{j_1}\wedge \cdots \wedge dx^{j_r}\,\}_{1\leq j_1<\cdots ,j_r\leq m}$ of $\Lambda^r T^*_{\bx_0} M$. If we set
 \[
 \gamma_K(d\tau):= \frac{1}{(2\pi)^{\frac{N-r}{2}}} e^{-\frac{|\tau|^2}{2}} d\tau\in \Omega^{N-r}(K_{\bx_0}),
 \] 
 then we deduce that
 \begin{equation}
 \frac{\pi_-^*\omega_\bsU\wedge \pi_+^*\eta}{dx^1\wedge \cdots \wedge dx^m}=\frac{1}{(2\pi)^{\frac{p}{2}}}\gamma_K \wedge f_M(\bx_0)\Omega(\tau_0)_{1,\dotsc,r}.
 \label{resid}
 \end{equation}
 Hence
 \begin{equation}
 \int_{K_{\bx_0}} \frac{\pi_-^*\omega_\bsU\wedge \pi_+^*\eta}{dx^1\wedge \cdots \wedge dx^m}=\frac{f_M(\bx_0)}{(2\pi)^{\frac{r}{2}}}\int_{K_{\bx_0}}\Omega(\tau)_{1,\dotsc,r} \gamma_K(d\tau).
 \label{intresid}
 \end{equation}
 In the sequel we will denote by $\bullet$ the  inner product in the space $K_{\bx_0}$ Our choice of local frames   amounts to a metric isomorphism $K_{\bx_0}\cong\bR^{N-r}$. 
 
 For every $i=1,\dotsc, r$  and $\tau\in K_{\bx_0}$ we   set
 \[
 \Phi_i:=\left[
 \begin{array}{c} 
 dy_{i r+1}(0)\\
 \vdots\\
 dy_{iN}(0)
 \end{array}
 \right]\in   T^*_{\bx_0}M\otimes K_{\bx_0},\;\; \omega_i(\tau)= \Phi_i\bullet\tau:=\sum_\alpha t^\alpha dy_{i\alpha}(0)\in    T^*_{\bx_0}M.
 \]
 Let us point out that the $(N-r)\times r$ matrix  with columns $\Phi_1,\dotsc, \Phi_r$  describes the differential at $\bx_0$ of the Gauss map 
 \[
 M\ni\bx\mapsto E_\bx\in \Gr_r(\bsU)=\mbox{the Grassmannian of $r$-planes in $\bsU$}.
 \]
 We have 
 \[
 \Omega(\tau)=\omega_1(\tau)\wedge \cdots \wedge \omega_r(\tau).
 \]
 For every $j=1,\dotsc, m$ and $\tau\in K_{\bx_0}$  we set
 \[
 \Phi_{ij}:=\pa_{x^j}\inpr\Phi_i= \left[
 \begin{array}{c} 
 \frac{\pa y_{i r+1}}{\pa x^j}(0)\\
 \vdots\\
 \frac{\pa y_{iN}}{\pa x^j} (0)
 \end{array}
 \right]\in K_{\bx_0},\;\;\omega_{ij}(\tau)=(\Phi_{ij},\tau)_\bsU=\Phi_{ij}\bullet\tau\in\bR.
 \]
 We denote by $A(\tau)$ the $r\times r$ matrix with entries
 \[
 A(\tau)_{ij}=\omega_{ij}(\tau),\;\;1\leq i,j\leq r.
 \]
Then
\[
\omega_i(\tau)=\sum_{j=1}^m \omega_{ij}(\tau)dx^j,\;\;\forall i=1,\dotsc, r, \;\;\Omega(\tau)_{1,\dotsc,r}=\det A(\tau).
\]
We  set
\begin{equation}
\overline{\Omega}_{1,\dotsc, r}:=\int_{K_{\bx_0}}\det A(\tau)\gamma_K(d\tau).
\label{average}
\end{equation} 
Using (\ref{intresid}) we deduce
\begin{equation}
\int_{K_{\bx_0}} \frac{\pi_-^*\omega_\bsU\wedge \pi_+^*\eta}{dx^1\wedge \cdots \wedge dx^m}=\frac{f_M(\bx_0)}{(2\pi)^{\frac{p}{2}}}\overline{\Omega}_{1,\dotsc, r}.
\label{inresid1}
\end{equation}
To compute  the Gaussian average (\ref{average})  we  use the theory of orthogonal invariants \cite{Weinv}  as in   Weyl's  proof of  his  tube  formula \cite[\S4.4]{Gray}, \cite[\S 9.3.3]{N1}, \cite{Wetube}.

Let us  first observe that for $1\leq i_1\neq i_2\leq r$  and $1\leq j_1<j_2\leq m$ we have
 \[
 \Phi_{i_1j_1}\bullet \Phi_{i_2j_2}-  \Phi_{i_1j_2}\bullet \Phi_{i_2j_1}=\sum_\alpha \left(\frac{\pa y_{i_1\alpha}}{\pa x^{j_1}} \frac{\pa y_{i_2\alpha}}{\pa x^{j_2}}-\frac{\pa y_{i_1\alpha}}{\pa x^{j_2}} \frac{\pa y_{i_2\alpha}}{\pa x^{j_1}}\right)
 \]
 \[
= \left( \sum_\alpha dy_{i_1\alpha}\wedge dy_{i_2\alpha}\right)(\pa_{x^{j_1}},\pa_{x^{j_2}} ).
 \]
 Using (\ref{re}) and the notation (\ref{not})  we deduce
 \begin{equation}
 F^{E}_{i_1i_2|j_1j_2}= \Phi_{i_1j_1}\bullet\Phi_{i_2j_2}-  \Phi_{i_1j_2}\bullet \Phi_{i_2j_1},\;\;\forall 1\leq i_1, i_2\leq r,\;\;1\leq j_1,j_2\leq m.
 \label{re1}
 \end{equation}
 For   any collection  of vectors $\bu_{ij}\in K_{\bx_0}$, $1\leq i,j\leq r$ and any $\tau\in K_{\bx_0}$  we  define the $r\times r$ matrix
 \[
 A(\tau,\bu_{ij}):= \bigl(\,\bu_{ij}\bullet \tau\,\bigr)_{1\leq i,j\leq r}, 
 \]
and we  consider  the average
 \[
\mu(\bu_{ij}):= \int_{K_{\bx_0}} \det A(\tau,\bu_{ij}) \gamma_K(d\tau).
 \]
 The average $\mu(\bu_{ij})$ is   a polynomial in the variables $\bu_{ij}\in K_{\bx_0}$, $1\leq i,j,\leq r$, and it is invariant with respect  to the action of the  group $O(N-r)$ of orthogonal transformations of $K_{\bx_0}$. Note that when $\bu_{ij}=\Phi_{ij}$ we have
 \[
 \mu(\Phi_{ij})=\overline{\Omega}_{1,\dotsc, r}.
 \]

  We recall that  $r=2h$ and we denote by $\eS_r=\eS_{2h}$ the group of permutations of $\{1,2,\dotsc, 2h\}$.  As in \cite[\S 9.3.3]{N1} we define
  \[
 Q_{\si,\vfi}(\bu_{ij}):=\prod_{j=1}^h\bigl(\bu_{\vfi_{2j-1}\si_{2j-1}}\bullet \bu_{\vfi_{2j}\si_{2j}}\bigr),\;\; Q= Q(\bu_{ij}):= \sum_{\si ,\vfi\in\eS_r}\eps(\si\vfi) Q_{\si,\vfi}(\bu_{ij}).
  \]
  Lemma 9.3.9 in \cite{N1} shows that there exists a constant $Z$ such that
  \[
  \mu(\bu_{ij})=ZQ(\bu_{ij}),\;\;\forall \bu_{ij}.
  \]
  To find the constant $Z$  we choose  the variables $\bu_{ij}\in K_{\bx_0}$ judiciously.  More precisely, we set
  \[
  \bu^*_{ij}:=\begin{cases}
  \be_N(0), & i=j,\\
  0, & i\neq 0.
  \end{cases}
  \]
  In this case
  \[
  A(\tau,\bu^*_{ij}) =\diag(\underbrace{t^N,\dotsc, t^N}_{2h}),\;\;\det A(\tau,\bu_{ij})=|t^N|^{2h},
  \]
  \[
  \mu\bigl(\,\bu_{ij}^*\,\bigr)= \int_{K_{\bx_0}}\bigl|\,t^N\,\bigr|^{2h} \gamma_K(d\tau)= \frac{1}{\sqrt{2\pi}}\int_{\bR} s^{2h} e^{-\frac{s^2}{2}} ds=\prod_{j=1}^h (2j-1)=(2h-1)!!.
  \]
 On the other  hand,
 \[
 Q_{\si,\vfi}\bigl(\,\bu_{ij}^*\,\bigr)= \begin{cases} 
 1, &\si=\vfi,\\
 0, &\si\neq \vfi,
 \end{cases}
 \]
 and we deduce that $Q\bigl(\,\bu_{ij}^*\,\bigr)=(2h)!$.  Thus 
 \[
 Z=\frac{(2h-1)!!}{(2h)!}=\frac{1}{2^h h!},\;\; \mu(\Phi_{ij})=\frac{1}{2^h h!}Q(\Phi_{ij}).
 \]
 Denote by $\eS_r'$ the set of permutations $\vfi$ of $\{1,2,\dotsc, 2h\}$ such that
 \[
 \vfi_1<\vfi_2,\;\;\vfi_3<\vfi_4,\dotsc, \vfi_{2h-1}<\vfi_{2h}.
 \]
 Using (\ref{re1})  we   deduce as in the proof  of \cite[Eq. (9.3.11)]{N1} that
 \begin{equation}
 Q(\Phi_{ij})=2^h\sum_{\si,\vfi\in\eS_r'} \;\prod_{j=1}^h\eps(\si\vfi)F^E_{\vfi_{2j-1}\vfi_{2j}|\si_{2j-1}\si_{2j}}.
 \label{qre}
 \end{equation}
 Thus
 \[
 \mu(\Phi_{ij})=\frac{1}{h!} \sum_{\si,\vfi\in\eS_r'} \prod_{j=1}^h\eps(\si\vfi)F^E_{\vfi_{2j-1}\vfi_{2j}|\si_{2j-1}\si_{2j}}\stackrel{(\ref{pf2})}{=}\pf\bigl(-F^E\bigr)(\pa_{x^1},\cdots,\pa_{x^r}).
 \]
 \begin{equation}
 \overline{\Omega}_{1,\dotsc,r}=\mu(\Phi_{ij})=\pf\bigl(-F^{E}\bigr)(\pa_{x^1},\dotsc,\pa_{x^r}).
 \label{average1}
 \end{equation}
 Using (\ref{inresid1}) and (\ref{average1}) we conclude that
 \[
 \left(\int_{K_{\bx_0}} \frac{\pi_-^*\omega_\bsU\wedge \pi_+^*\eta}{dx^1\wedge \cdots \wedge dx^m}\right) dx^1\wedge \cdots \wedge dx^m= \frac{f_M(\bx_0)}{(2\pi)^{\frac{r}{2}}}\overline{\Omega}_{1,\dotsc, p}dx^1\wedge\cdots \wedge dx^m 
 \]
 \[
 =\frac{1}{(2\pi)^h} \pf\bigl(-F^E\bigr)\wedge \eta .
  \]
 This proves (\ref{intfor2}). \qed

 \section{The  white noise limit}
 \label{s: 3}
 \setcounter{equation}{0}
 
 \subsection{Gaussian measures}\label{ss: 41} Recall \cite{Bog}   that a  \emph{centered Gaussian measure} on a  finite dimensional real vector space $\bsU$ is a probability measure $\gamma$ on $\bsU$ such that for any linear functional $\xi\in\bsU^*=\Hom( \bsU,\bR)$  the pushforward $\xi_{\#}\gamma$ is   Gaussian measure on $\bR$
 \[
 \xi_{\#}\gamma=\bgamma_v:=\frac{1}{\sqrt{2\pi v}} e^{-\frac{\xi^2}{2v}} d\xi, \;\;v\geq 0.
 \]
 Above, when $v=0$, we define $\gamma_v$ to  the Dirac  delta-measure concentrated at $0$.
 
 A centered Gaussian measure $\gamma$ on $\bsU$ is completely determined by its covariance form $C=C_\gamma$ which is the symmetric, nonnegative definite   bilinear form
 \[
 C:\bsU^*\times\bsU^*\to\bR,\;\; C(\xi_1,\xi_2)= \bsE_\gamma(\xi_1\cdot\xi_2),
 \]
 where  $\xi_1,\xi_2\in\bsU^*$ are viewed as random variables on $(\bsU, \gamma)$. The   Gaussian measure $\gamma$ is called \emph{nondegenerate} if its covariance form is  nondegenerate.   If this is the case, the bilinear form defines an Euclidean inner product  on $\bsU^*$ and, by duality, an inner product on $\bsU$. 
 
 Conversely, given an inner product $\bsi$ on $\bsU$ with norm $|-|_{\bsi}$, we have a Gaussian measure
 \begin{equation}
\gamma_{\bsi}=(2\pi)^{-\frac{\dim\bsU}{2}} e^{-\frac{|\bu|_{\bsi}^2 }{2} } |d\bu|_{\bsi},
 \label{met_gauss}
 \end{equation}
 and $\bsi$ coincides with  the inner product determined by $\gamma_{\bsi}$. 
 
 The inner product $\bsi$ identifies $\bsU$ with $\bsU^*$ and the covariance form  of an arbitrary  Gaussian measure  $\gamma$ on $\bsU$ can be identified with a symmetric  nonnegative operator $T_\gamma:\bsU\to\bsU$. The measure $\gamma$ is nondegenerate iff $T_\gamma$ is invertible. In this case
 \begin{equation}
 \gamma =\frac{1}{\sqrt{\det 2\pi T_\gamma}} e^{-\frac{1}{2}\bsi(T_\gamma^{-1}\bu,\bu)} |d\bu|_{\bsi}=\Bigl(T_\gamma^{\frac{1}{2}}\Bigr)_\#\gamma_{\bsi}.
 \label{opgau}
 \end{equation}
 Note that if $\gamma$ is a centered Gaussian measure on $\bsU$ with covariance form $C_\gamma$ and $L:\bsU\to\bsV$ is a linear map to another finite dimensional vector space $\bsV$ then the pushforward $L_\#\gamma$ is a Gaussian measure on $\bsV$ with covariance form $C_{L_\#\gamma}=L^*C_\gamma$. In particular, if $\gamma$ is as in (\ref{opgau}), then
 \[
 \gamma=\Bigl(T_\gamma^{\frac{1}{2}}\Bigr)_\#\gamma_{\bsi}.
 \]

 \subsection{Probabilistic descriptions of special metrics and connection}\label{ss: 42}  Suppose that we are  given a smooth  real vector bundle $E\to M$ of rank $r$,  and a sample space $(\bsU,\bgamma_\bsU)$ of $C^\infty(E)$.    The nondegenerate Gaussian measure  $\gamma_\bsU$ on $\bsU$  determines a metric $(-,-)_\bsU$.
 
 As we have seen, the  metric $(-,-)_\bsU$ on $\bsU$ induces a metric $(-,-)_E$ on the bundle $E$ and by duality, a metric on $E^*$.   We want to  give a probabilistic description of the induced metric $(-,-)_{E^*}$ in a fiber $E^*_{\bx}$ of $E^*$.  
 
 To simplify the presentation we introduce some notations and conventions.
 
 \begin{enumerate} 
 
  \item  We will  use the $\bullet$-notation to denote the inner product in $\bsU$ or  $\bsU^*$.

 \item      We will use the Latin letters  $i,j,k,\ell$ to denote indices  in the range $1,\dotsc, m=\dim M$.
 
 \item We will use   the Greek letters $\alpha,\beta,\gamma$ to denote indices in the range $1,\dotsc, r= {\rm rank}\,(E)$.
 
 \end{enumerate}
 Let
 \[
 \lan-,-\ran : E^*_{\bx}\times E_{\bx}\to \bR
 \]
 denote the natural pairing. Fix an orthonormal basis $\Psi_1,\dotsc,\Psi_N$ of $\bsU$ and denote by $(\Psi_n^*)$ the dual orthonormal basis of $\bsU^*$.  Then $\ev_\bx^*:E_\bx^*\to\bsU^*$ is given by
 \[
 \ev_\bx(\bu^*) =\sum_{n=1}^N \bigl\lan\, \bu^*,\Psi_n(x)\,\bigr\ran\Psi_n^*,
 \]
 and
 \[
 (\bu_1^*,\bu_2^*)_{E^*}= (\ev_\bx^*\bu_1^*)\bullet(\ev_\bx^*\bu_2^*)=\sum_{n=1}^N  \bigl\lan\, \bu_1^*,\Psi_n(x)\,\bigr\ran \bigl\lan\, \bu_2^*,\Psi_n(x)\,\bigr\ran.
 \]
 Thus the  metric $(-,-)_{E^*}$ is described by the   bilinear form $\bsC(\bx)$ on $E^*_\bx$ given by
 \[
 \bsC_\bx=\sum_{n=1}^N \Psi_n(\bx)\otimes \Psi_n(\bx)\in  E_\bx\otimes E_\bx\cong \Hom(E_\bx^*\otimes E_\bx^*,\bR).
 \]
The bilinear form $\bsC_\bx$  has a probabilistic interpretation:  it is the covariance form  of  the  Gaussian measure $(\ev_\bx)_\#\gamma_\bsU$   on $E_\bx$. 
 
 We have a  metric  duality isomorphism
 \[
 \bsD=\bsD_\bx: E_\bx\to E_\bx^*,\;\; (\bv^*,\bsD\bu)_{E^*}:=\lan \bv^*,\bu\ran.
 \]
Fix a point $\bx_0$ and   a  small    coordinate neighborhood $\eO$ of $\bx_0$  with coordinates $(x^i)$ such  that $x^i(\bx_0)=0$.  Suppose  that $(\be^\alpha(x))$ is a local frame  of $E^*$ defined on  $\eO$.  Denote by $(\be_\alpha(x))$ the dual  moving frame. We set
 \[
 C^{\alpha\beta}(x):= \bsC_x\bigl(\,\be^\alpha(x),\be^\beta(x)\,\bigr).
 \]
 The matrix $C(x)=(C^{\alpha\beta}(x))$ is symmetric and positive definite.  We denote by  $(C_{\alpha\beta}(x) )$ the inverse matrix.   If we write
 \[
 \bsD\be_\alpha=:\sum_\beta D_{\beta\alpha}\be^\beta,
 \]
 then we deduce
 \[
\delta^\gamma_\alpha=\lan\be^\gamma,\be_\alpha\ran= \Bigl(\, \be^\gamma, \sum_\beta D_{\beta\alpha}\be^\beta\,\Bigr)_{E^*}=\sum_\beta C^{\gamma\beta}D_{\beta\alpha}
\]
which shows that the duality isomorphism  $\bsD$ is  represented in these  bases by the inverse of the matrix  $C$, $D_{\beta\alpha}(x)=C_{\beta\alpha}(x)$.

We  want to compute  the covariant derivatives
\[
\nabla^{E^*}_{i} \be^\alpha(0):=\nabla^{E^*}_{\pa_{x^i}} \be^\alpha(0).
\]
We set  
\[
\Psi_n^\alpha(x):= \Bigl\lan\,\be^\alpha(x),\Psi_n(x)\,\Bigr\ran\in \bR,\;\;\forall n=1,\dotsc, N,
\]
 and we deduce
\[
\ev_x^*\be^\alpha(x)=\sum_{n=1}^N\Psi^\alpha_n(x) \Psi^*_n,\;\;\pa_{x^i}\Bigl( \ev_x^*\be^\alpha(x)\Bigr)= \sum_{n=1}^N\pa_{x^i}\Psi^\alpha_n(x) \Psi^*_n.
\]
We denote by $P_x$ the orthogonal projection $\bsU^*\to E_x^*$. Then 
\[
\nabla^{E^*}_i\be^\alpha(x)= P_x \pa_i \Bigl(\ev_x^*\be^\alpha(x) \Bigr)=\bsD_x\left(\sum_n \pa_i\Psi_n^\alpha(x)\Psi_n(x)\right)
\]
\[
=\bsD_x\left(\sum_{n,\beta} \pa_i\Psi_n^\alpha(x)\Psi_n^\beta(x)\be_\beta(x)\right)=\sum_{n,\beta,\gamma}\pa_i\Psi_n^\alpha(x)\Psi_n^\beta(x) C_{\gamma\beta}(x) \be^\gamma(x)
\]
\[
=\sum_\gamma\,\underbrace{\left(\sum_n\sum_{\beta}\pa_i\Psi_n^\alpha(x)\Psi_n^\beta(x) C_{\gamma\beta}(x) \right)}_{=:\Gamma^\alpha_{\gamma|i}(x)}\,\be^\gamma(x).
\]
For every $\bx,\by\in \eO$, we  denote by $(x^i)$ the coordinates of $\bx$, by $(y^i)$ the coordinates of $\by$, and we set
\begin{equation}
\begin{split}
C_{\bx,\by}:=\sum_{n=1}^N \Psi_n(\bx)\otimes \Psi_n(\by)\in E_\bx\otimes  E_{\by},\\
C^{\alpha\beta}(x,y):=\sum_{n=1}^N \bigl\lan\,\be^\alpha(\bx),\Psi_n(\bx)\,\bigr\ran\bigl\lan\,\be^\beta(\by),\Psi_n(\by)\,\bigr\ran.
\end{split}
\label{covariance}
\end{equation}
One should think  of $C_{\bx,\by}$ as a \emph{covariance kernel}  defined by the random  section $\bu\in\bsU$ because  it  captures the  correlations between the values  of $\bu$ at $\bx$ and $\by$. We deduce that
\[
\sum_{n}\pa_i\Psi_n^\alpha(x)\Psi_n^\beta(x) =\pa_{x^i}C^{\alpha\beta}(x,y)|_{x=y}.
\]
Hence
\begin{equation}
\nabla_i^{E^*}\be^\alpha(x)=\sum_\gamma\Gamma^\alpha_{\gamma|i}(x)e^\gamma(x),\;\;\Gamma^\alpha_{\gamma|_i}(x)=\sum_\beta \pa_{x^i}C^{\alpha\beta}(x,y)|_{x=y}C_{\gamma\beta}(x).
\label{gamma1}
\end{equation}
By duality we deduce
\begin{equation}
\nabla^E_i\be_\alpha(x) =-\sum_\beta\Gamma^\beta_{\alpha|i}(x)\be_\beta(x).
\label{nablae1}
\end{equation}
We denote by  $\Gamma_i(x)$ the endomorphism  of $E_\bx$ given by
\[
\be_\alpha(x)\mapsto \sum_\beta\Gamma^\beta_{\alpha|i}(x)\be_\beta(x).
\]
From (\ref{gamma1})  and the symmetry of the bilinear form $C(x)$ we deduce that
\begin{equation}
\Gamma_i(x)=\pa_{x^i}C(x,y)|_{x=y}\cdot (C(x)^T)^{-1}=\pa_{x^i}C(x,y)|_{x=y}\cdot C(x)^{-1}.
\label{gamma2}
\end{equation}
We set
\[
\Gamma=\sum_idx^i\Gamma_i =d_xC(x,y)|_{x=y} C(x)^{-1}
\]
The operator valued $1$-form $-\Gamma$   describes the connection $\nabla^E$ in the  local frame $(\be_\alpha(x))$, 
\[
\nabla^E=d-\Gamma. 
\]
The curvature is then
\begin{equation}
F^E=-d\Gamma+\Gamma\wedge \Gamma=-\sum_{i,j}(\pa_{x^i}\Gamma_j-\pa_{x^j}\Gamma_i)dx^i\wedge dx^j+\sum_{i<j}[\Gamma_i,\Gamma_j] dx^i\wedge dx^i.
\label{re2}
\end{equation}
Concretely
\[
\pa_{x^i}\Gamma^\alpha_{\gamma|j}=\pa_{x^i}\sum_n\sum_{\beta}\pa_{x^j}\Psi_n^\alpha(x)\Psi_n^\beta(x) C_{\gamma\beta}(x)
\]
\[
= \sum_n\sum_{\beta}\pa^2_{x^ix^j}\Psi_n^\alpha(x)\Psi_n^\beta(x) C_{\gamma\beta}(x)+\sum_n+\sum_{\beta}\pa_{x^j}\Psi_n^\alpha(x)\pa_{x^i}\Psi_n^\beta(x) C_{\gamma\beta}(x)
\]
\[
\sum_n\sum_{\beta}\pa_{x^j}\Psi_n^\alpha(x)\Psi_n^\beta(x) \pa_{x^i}C_{\gamma\beta}(x).
\]
We deduce
\begin{equation}
\begin{split}
\pa_{x^i}\Gamma_j &=\pa^2_{x^ix^j}C(x,y)|_{x=y}C(x)^{-1}+\pa^2_{x^jy^i}C(x,y)|_{x=y} C(x)^{-1} \\
& +\Bigl(\,\pa_{x^j}C(x,y)|_{x=y}\,\Bigr)\cdot \pa_{x^i}\bigl(\,C(x)^{-1}\,\bigr).
\end{split}
\label{reij}
\end{equation}

Suppose that $E$   came equipped  with another metric $\bsi_0(-,-)$ and connection $\nabla^0$ compatible with this metric.  Then
\[
\nabla^E= \nabla^0+ A=\nabla^0+\sum dx^i\wedge A_i,
\]
where $A$ is a \emph{globally defined}  operator valued $1$-form, $A\in \Omega^1\bigl(\,\End (E)\,)$.

If we choose the local frame frame $(\be^\alpha(x))$ on $\eO$ to be orthonormal with respect to the metric $\bsi_0$, and  $\nabla^0\be^\alpha|_{x=0}=0$, then
 \[
 \pa_i\ev_\bx^*\be^\alpha(x)|_{x=0}=\sum_{n=1}^N \pa_i\bsi_0\bigl(\,\Psi_n(0),\be_\alpha(0)\,\bigr)\Psi_n=\sum_{n=1}^N \bsi_0\bigl(\,\nabla^0_i\Psi_n(0),\be_\alpha(0)\,\bigr)\Psi_n.
\]
It follows that
\begin{equation}
\nabla_i{E^*}\be^\alpha(0)=\sum_\gamma\,\underbrace{\left(\sum_n\sum_{\beta}(\nabla_i^0\Psi_n)^\alpha(0)\Psi_n^\beta(0) C_{\gamma\beta}(0,0) \right)}_{=:A^\alpha_{\gamma|i}(0)}\,\be^\gamma(0),
\label{a_00}
\end{equation}
where
\[
(\nabla^0\Psi_n)^\alpha(x):= \lan\be^\alpha(x),\nabla^0_i\Psi_n(x)\ran.
\]
We deduce
\begin{equation}
\nabla^E_i\be_\gamma(0)=-\sum_\alpha A^\alpha_{\gamma|i}(0) dx.
\label{a_0}
\end{equation}
We  denote by $(A_i(x))$ the endomorphism of $E_x$ given by the matrix $(-A^\alpha_{\gamma|i})_{1\leq \alpha,\gamma\leq r}$.

We can rewrite this in  an invariant way as follows. Consider the  natural projections
\[
M\stackrel{p_+}{\leftarrow} M\times M\stackrel{p_-}{\to} M,\;\;p_\pm(\bx_+,\bx_-)=\bx_\pm,
\]
and    the bundle
\[
E\boxtimes E:=p_+^*E\otimes p_-^* E.
\]
Then $C(\bx_+,\bx_-)$ is a global section of  $E\boxtimes E$.  Its restriction to the diagonal can be identified with the section $C(\bx)$ of the bundle $E\otimes E$ over $M$.  We deduce\begin{equation}
A(\bx) =\sum_i A_i(x) dx^i =-\sum_i\nabla^0_{x^i} C(x,y)_{x=y}  \,\cdot \,C(x)^{-1}.
\label{a}
\end{equation}
Indeed, both sides of the above equality are  globally defined  $\End(E)$-valued $1$-forms on $M$.   It therefore suffices to verify  (\ref{a}) at an arbitrary point $\bx_0$ in  some local coordinates  near $\bx_0$ and some  local trivialization of $E$.   We have done this already in (\ref{a_0}).

We denote by $F^0$ the curvature of $\nabla^0$ and by  $F^E$ the curvature of $\nabla^{E}$.  Then
\[
F^0=\sum_{i<j}F^0_{ij}dx^i\wedge dx^j,\;\;F^E=\sum_{i<j}F^E_{ij}dx^i\wedge dx^j,
\]
and
\begin{equation}
F^E_{ij}=F^0_{ij}+ \nabla^0_{x^i}A_j-\nabla^0_{x^j}A_i+ [A_i,A_j].
\label{curv}
\end{equation}
Observe that
\begin{subequations}
\begin{equation}
\begin{split}
\nabla^0_{x^i}A_j & =-\underbrace{\Bigl(\nabla^0_{x^i}\nabla^0_{x^j} C(x,y)|_{x=y} +\nabla^0_{y^i}\nabla^0_{x^j} C(x,y)|_{x=y} \,\Bigr)}_{=:T_{ij}(x)}\cdot C(x)^{-1}\\
&-\nabla^0_{x^j} C(x,y)|_{x=y}\cdot\nabla^0_{x^i}\bigl(\,C(x)^{-1}\,\bigr),
\end{split}
\label{paij}
\end{equation}
\begin{equation}
\nabla^0_{x^i}\bigl(\,C(x)^{-1}\,\bigr)=-C_x^{-1}\bigl(\,\nabla^0_{x^i} C(x)\,\bigr)C_x^{-1},
\label{naic0}
\end{equation}
\begin{equation}
\nabla^0_{x^i}C(x)=\nabla_{x^i}^0C(x,y)|_{x=y}+\nabla^0_{y^i}C(x,y)|_{x=y}.
\label{naic}
\end{equation}
\end{subequations}

\subsection{Probabilistic reconstruction of the geometry of a vector bundle}  Suppose  that we are given a smooth rank $r$ real vector bundle $E\to M$ over the smooth compact manifold    $M$. We fix   a   metric $\bsi_0$ on $E$ and a connection $\nabla^0$ on $E$ compatible with $\bsi_0$. We want to construct  a  family of sample spaces  $(\bsU_\ve,\bgamma_\ve)\subset C^\infty(E)$   with associated  special (metric, connection)-pair $(\bsi_\ve,\nabla^\ve)$ satisfying the conditions (\ref{scla},\ref{sclb},\ref{sclc}).    We use a spectral geometry  approach.

We  fix a Riemann metric $g$  on $M$ with volume  density $|dV_g|$.  We can form the covariant  Laplacian
\[
\Delta_0=\bigl(\, \nabla^0\,\bigr)^*\nabla^0: C^\infty(E)\to C^\infty(E).
\]
This is a    symmetric, nonnegative definite  second order elliptic  operator   whose principal symbol is scalar
\[
\si(\Delta_0)(\bx,\xi)= |\xi|^2_g \one_{E_\bx},\;\;\forall\bx\in M,\;\;\xi\in T^*_\bx M.
\]
Let 
\[
\spec(\Delta_0)= \lambda_1\leq \lambda_2\leq\cdots,
\]
where in the above sequence each eigenvalue  appears as many times as its  multiplicity. We fix an orthonormal eigenbasis $(\Psi_n)_{n\geq 1}$ of $L^2(E)$
\[
\Delta_0\Psi_n=\lambda_n\Psi_n,\;\;\forall n.
\]
Now fix    an even, smooth, compactly supported function $w: \bR\to[0,\infty)$. Assume that $w(0)\neq 0$. 

 For each  $\ve>0$ we have  a smoothing  selfadjoint operator
\[
W_\ve= w\bigl(\,\ve\sqrt{\Delta_0}\,\bigr): L^2(E)\to L^2(E).
\]
Define
\[
\bsU_\ve:= {\rm Range}\,(W_\ve)=\spa\bigl\{ \Psi_n;\;\;w(\ve\sqrt{\lambda_n})\neq 0\,\bigr\}\subset C^\infty(E).
\]
Note that $\bsU_\ve$  is  a finite dimensional  invariant subspace   of $W_\ve$. The restriction of $W_\ve$ to $\bsU_\ve$  is  invertible and selfadjoint with respect to the $L^2$-inner product on $\bsU_\ve$. As such,  it defines  a nondegenerate Gaussian measure $\gamma_\ve$  on $\bsU_\ve$ following the prescription (\ref{opgau})
\[
\gamma_\ve(d\bu) =\frac{1}{\sqrt{\det 2\pi W_\ve}}  e^{-\frac{1}{2}(W_\ve^{-1}\bu,\bu)_{L^2}} |d\bu|_{L^2},
\]
where $(-,-)_{L^2}$ denotes the $L^2$-inner product on $\bsU_\ve$ and $|d\bu|_{L^2}$ denotes the associated Lebesgue measure on $\bsU_\ve$. We set
\begin{equation}\label{kw}
\kappa(w): =\left(\int_0^\infty w(t) t^{m-1} dt\right){\rm vol}\,(S^{m-1})
\end{equation}
We denote generically by $L^{1,p}$ the Sobolev spaces norms  of $L^p$-functions with first order derivatives in $L^p$.

 \begin{theorem} Denote by $(\bsi_\ve,\nabla^\ve)$ the special (metric, connection)-pair determined on $E$ by the sample space $(\bsU_\ve, \gamma_\ve)$ constructed as above.  For each $\ve\geq 0$ we denote by $F^\ve$ the curvature of $\nabla^\ve$. Then for each $p\in(1,\infty)$ there exists a positive constant $K=K(p)$ such that the following hold
\[
\Vert \ve^m\bsi_\ve -\kappa(w) \bsi_0\Vert_{C^0}+\Vert\nabla^\ve-\nabla^0\Vert_{L^{1,p}}+\Vert F^\ve-F^0\Vert_{C^0}\leq K(p) \ve\;\;\mbox{as $\ve\searrow 0$}.
\]
\label{th: scl}
\end{theorem}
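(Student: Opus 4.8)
The plan is to reduce all three estimates to the short-scale (small $\ve$) behaviour of the Schwartz kernel of the smoothing operator $W_\ve=w(\ve\sqrt{\Delta_0})$ and of its first two $\nabla^0$-covariant derivatives restricted to the diagonal, and then to feed the result into the probabilistic formulas of Subsection~\ref{ss: 42}. First I would observe that the covariance kernel of the random section $\bu_\ve=\sum_n X_n^\ve\Psi_n\in\bsU_\ve$, namely
\[
C_\ve(\bx,\by):=\bsE_{\gamma_\ve}\bigl(\,\bu_\ve(\bx)\otimes\bu_\ve(\by)\,\bigr)=\sum_n w(\ve\sqrt{\lambda_n})\,\Psi_n(\bx)\otimes\Psi_n(\by),
\]
is exactly the Schwartz kernel $\mathcal{W}_\ve(\bx,\by)$ of $W_\ve$, viewed as a smooth section of $E\boxtimes E$. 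By \eqref{covariance}, \eqref{a} and \eqref{curv}, the metric $\bsi_\ve$, the connection $\nabla^\ve$ and its curvature $F^\ve$ are determined by $C_\ve$ alone: the metric on $E^*$ (equivalently $\bsi_\ve$ under the duality $\bsD_\bx$) is the diagonal value $C_\ve(\bx):=C_\ve(\bx,\bx)$; the globally defined $\End(E)$-valued $1$-form $A^\ve:=\nabla^\ve-\nabla^0$ equals $-\bigl(\nabla^0_\bx C_\ve(\bx,\by)|_{\bx=\by}\bigr)\,C_\ve(\bx)^{-1}$; and $F^\ve-F^0$ is the expression \eqref{curv} in $A^\ve$ and $\nabla^0 A^\ve$, hence an algebraic combination of $C_\ve(\bx)$, $C_\ve(\bx)^{-1}$ and the first two covariant derivatives of $C_\ve(\bx,\by)$ at the diagonal. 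Thus everything reduces to the $\ve\searrow 0$ asymptotics of these few objects.

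Next I would extract those asymptotics by the wave-kernel technique of H\"ormander \cite{Hspec}. Since $w$ is even one has $W_\ve=\tfrac{1}{2\pi}\int_{\bR}\hat w(s)\cos\bigl(\ve s\sqrt{\Delta_0}\bigr)\,ds$; for bounded $|s|$ and small $\ve$ one substitutes a Fourier integral parametrix for the half-wave group of $\Delta_0$, valid for short times by finite propagation speed, while the rapid (Paley--Wiener) decay of $\hat w$ absorbs the large-$|s|$ tail. Because $\Delta_0=(\nabla^0)^*\nabla^0$ is of Laplace type, with scalar principal symbol $|\xi|_g^2\,\one_E$, the phase functions are the scalar ones and the matrix-valued amplitude obeys transport equations identical to those of the heat kernel; one can therefore pin down the leading coefficients by comparison with the Minakshisundaram--Pleijel expansion $e^{-t\Delta_0}(\bx,\by)\sim(4\pi t)^{-m/2}e^{-d_g(\bx,\by)^2/4t}\sum_{j\ge 0}t^j\Theta_j(\bx,\by)$, whose coefficients are classical: $\Theta_0$ is a positive scalar factor (equal to $1$ on the diagonal) times the $\nabla^0$-parallel transport along the short geodesic from $\by$ to $\bx$, $\Theta_1(\bx,\bx)=\tfrac16\,\mathrm{scal}_g(\bx)\,\one_{E_\bx}$, and so forth. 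Carrying out this identification of terms in the non-scalar setting is, I expect, the most laborious part, and is best done by peeling off one power of $\ve$ at a time and reducing each to the heat case. It gives $C_\ve(\bx,\bx)=\ve^{-m}\bigl(\kappa(w)\,\one_{E_\bx}+O(\ve^2)\bigr)$ --- the $O(\ve)$ term being killed by the parity of $w$ --- whence $\Vert\ve^m\bsi_\ve-\kappa(w)\bsi_0\Vert_{C^0}=O(\ve^2)$. Applying the same expansion to $\nabla^0_\bx C_\ve(\bx,\by)|_{\bx=\by}$, the leading $\ve^{-m}$ contribution drops out: the $\ve^{-m}$-amplitude is a radial scalar times the $\nabla^0$-parallel transport $\Theta_0$, so both its covariant derivative in $\bx$ and its derivative in the displacement variable vanish at coincidence (for parallel transport the former, for a radial amplitude the latter). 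What survives is $O(\ve^{-m+1})$, and multiplication by $C_\ve(\bx)^{-1}=O(\ve^m)$ gives $A^\ve=O(\ve)$ pointwise.

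Finally, the genuinely delicate step --- the one the authors compare to Uhlenbeck's argument in \cite{U} --- is to obtain, uniformly in $\ve$ and for every $p\in(1,\infty)$, the curvature estimate $\Vert F^\ve-F^0\Vert_{C^0}=O(\ve)$ together with the Sobolev estimate $\Vert\nabla^\ve-\nabla^0\Vert_{L^{1,p}}=O(\ve)$. Here a crude bound loses a power of $\ve$ at each covariant differentiation, so one must show that the structural cancellations responsible for the gain of one $\ve$ in $A^\ve$ are stable under a further $\nabla^0$: this requires estimating $\nabla^0_\bx\nabla^0_\bx C_\ve(\bx,\by)|_{\bx=\by}$ and $\nabla^0_\bx\nabla^0_\by C_\ve(\bx,\by)|_{\bx=\by}$ directly from the parametrix, with $O(\ve)$ remainders after a geometric leading term which, via \eqref{curv}, reproduces exactly $F^0$; one then upgrades the resulting $L^p$-control of $F^\ve-F^0$ to $L^{1,p}$-control of the $1$-form $A^\ve$ through an elliptic estimate for $\Delta_0$ in a suitable gauge, with constants that do not degenerate as $p\to1$ or $p\to\infty$. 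Putting together the estimates for $\bsi_\ve$, $A^\ve$ and $F^\ve-F^0$ yields the inequality with $K=K(p)$; properties \eqref{scla} and \eqref{sclb} hold trivially since $w(0)\neq0$ and $\supp w$ is compact, and \eqref{sclm}, \eqref{sclc} follow at once.
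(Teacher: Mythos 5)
Your proposal is correct and follows essentially the same route as the paper: reduce the three estimates to diagonal asymptotics of the Schwartz kernel of $W_\ve$ and its first two $\nabla^0$-derivatives (the content of Lemma \ref{lemma: key}), obtain those asymptotics from a H\"ormander-type parametrix with the odd-order terms killed by the parity of $w$ via reduction to the heat kernel, and upgrade the pointwise $O(\ve)$ bound on $A^\ve=\nabla^\ve-\nabla^0$ to $L^{1,p}$ by an elliptic estimate for the first-order operator $d^{\nabla^0}+(d^{\nabla^0})^*$ in the spirit of Uhlenbeck. The only cosmetic differences are that the paper pins down the vanishing of the odd terms by a density argument (Gaussians in even Schwartz functions) plus homogeneity rather than by direct comparison of parametrices, and that the endpoint-uniformity of the $L^p$ constants you mention is neither needed nor true.
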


\begin{proof}  Consider the covariance form $C_\ve(\bx,\by)\in C^\infty(E\boxtimes E)$ determined as in Subsection \ref{ss: 42}  by  the inner product on $\bsU_\ve$  defined by the Gaussian measure $\gamma_\ve$. If we identify $E$ with $E^*$ using the metric $\bsi_0$ we can view $C_\ve$ as a section of $E\boxtimes E^*$. As such, it    coincides with the Schwartz kernel of $W_\ve$. 

The next result    contains   the key  estimates    responsible for the conclusions  in Theorem \ref{th: scl}. We defer its very technical proof to the next subsection.

\begin{lemma} Let $\rho$ denote the injectivity radius of $(M,g)$. Fix a point $\bx_0\in M$ and normal coordinates $(x^i)$ on  the open geodesic ball $B_\rho(\bx_0)$ centered at $\bx_0$.   Fix a trivialization of $E$ over $B_\rho(\bx_0)$ obtained by  $\nabla^0$-parallel transport along the geodesic rays starting at $\bx_0$. Then the following hold.

\smallskip 

\noindent (a) There exist  constants  $K,\ve_0>0$ such that
\begin{equation}
|C_\ve(x,x)-\kappa(w) \ve^{-m}\one_{E_x}| \leq K\ve^{2-m},\;\;\forall \ve<\ve_0,\;\;\mbox{ $\forall x\in B_{\rho/2}(\bx_0)$}.
\label{ker0}
\end{equation}
(b) For $1\leq i\leq m$ the limits 
\begin{equation}
\lim_{\ve\to 0}\ve^m \nabla^0_{x^i}C_\ve(x,y)_{x=y},\;\;\lim_{\ve\to 0}\ve^m\nabla^0_{y^i}C_\ve(x,y)_{x=y}
\label{ker1}
\end{equation}
exist uniformly in $x\in B_{\rho/2}(\bx_0)$  and the rate of convergence in $C^0\bigl(\,B_{\rho/2}(\bx_0)\,\bigr)$ is $O(\ve)$.  Moreover
\begin{equation}
\lim_{\ve\to 0}\ve^m \nabla^0_{x^i}C_\ve(x,y)_{x=y=\bx_0}=0.
\label{ker10}
\end{equation}
(c) For $1\leq i\neq j\leq m$ the limits 
\begin{equation}
\lim_{\ve\to 0}\ve^m \nabla^0_{x^ix^j}C_\ve(x,y)_{x=y},\;\;\lim_{\ve\to 0}\ve^m \nabla^0_{y^i}\nabla^0_{y^j}C_\ve(x,y)_{x=y}\,\;\;\lim_{\ve\to 0}\ve^m \nabla^0_{x^i}\nabla^0_{y^j}C_\ve(x,y)_{x=y}
\label{ker2}
\end{equation}
exist uniformly in $x\in B_{\rho/2}(\bx_0)$ and the rate of convergence in $C^0\bigl(\,B_{\rho/2}(\bx_0)\,\bigr)$ is $O(\ve)$.

\noindent (d) For $1\leq i\leq m$ the limit
\begin{equation}
\lim_{\ve\to 0}\ve^m\left( \,\nabla^0_{x^i}\nabla^0_{x^i} C_\ve(x,y)_{x=y}+ \nabla^0_{y^i}\nabla^0_{x^i}C_\ve(x,y)_{x=y}\,\right)
\label{ker3}
\end{equation}
exists uniformly in $x\in B_{\rho/2}(\bx_0)$ and the rate of convergence in $C^0\bigl(\,B_{\rho/2}(\bx_0)\,\bigr)$ is $O(\ve)$.
\qed
\label{lemma: key}
\end{lemma}

 Assuming  the  validity of Lemma \ref{lemma: key} we  proceed as follows. Fix  $\bx_0\in M$ and normal coordinates in $B_\rho(\bx_0)$ centered at $\bx_0$. For simplicity we write $\kappa$ instead of $\kappa(w)$. We deduce from   (\ref{ker0}) that
\[
\Vert \ve^m\bsi_\ve-\kappa \bsi_0\Vert_{C_0}=O(\ve^2)\;\;\mbox{as $\ve\to 0$}.
\]
In the sequel the Landau symbol $O$ refers to the $C^0$-norm  on  $B_{\rho/2}(\bx_0)$. Note also that  (\ref{ker0}) implies that 
\begin{equation}
C_\ve(\bx)^{-1} =\ve^m\Bigl(\kappa^{-1}\one_{E_\bx}+O(\ve^2)\,\Bigr).
\label{cinv}
\end{equation}
If we write $A^\ve:=\nabla^\ve-\nabla^0$, then    we deduce  from (\ref{a}) and (\ref{ker1})   that 
\[
A^\ve_i(x)= -\nabla^0_{x^i}C_\ve(x,y)_{x=y}\cdot C_\ve(x)^{-1}=-\ve^m\nabla^0_{x^i}C_\ve(x,y)_{x=y}\Bigl(\kappa^{-1}\one_{E_\bx}+O(\ve^2)\,\Bigr)
\]
has a  limit as $\ve\to 0$  uniform in $x\in B_{\rho/2}(\bx_0)$. We set
\begin{equation}
\bar{A}_i(x) :=\lim_{\ve\to 0}A^\ve_i(x).
\label{diff_conn}
\end{equation}
Moreover (\ref{ker10}) implies
\begin{equation}
\bar{A}_i(\bx_0) =0.
\label{diff_conn0}
\end{equation}
We have
\begin{equation}
\bigl\Vert \bar{A}_i-A^\ve_i\bigr\Vert_{C^0(B_{\rho/2}(\bx_0))}=O(\ve).
\label{diff_conna}
\end{equation} 
Using (\ref{curv})  we deduce that  along $B_\rho(\bx_0)$  and for $i\neq j$ we have
\[
F^\ve_{ij}-F^0_{ij}=\nabla^0_{x^i}A^\ve_j-\nabla^0_{x^j}A^\ve_i+ [A^\ve_i,A^\ve_j].
\]
From  (\ref{diff_conna})  we deduce
\begin{equation}
 \bigl\Vert\, [A^\ve_i,A_j^\ve]-[\bar{A}_i,\bar{A}_j]\,\bigr\Vert_{C^0(B_{\rho/2}(\bx_0))}=O(\ve).
\label{diff_conn1}
\end{equation} 
To estimate $\nabla^0_{x^i}A^\ve_j(x)$ we use (\ref{paij}) and we have
\[
\nabla^0_{x^i}A^\ve_j(x)= -T^\ve_{ij}(x) C_\ve(x)^{-1}- \nabla^0_{x^j} C_\ve(x,y)_{x=y}\cdot\nabla^0_{x^i}\bigl(\,C_\ve(x)^{-1}\,\bigr),
\]
\[
T_{ij}^\ve(x)=\nabla^0_{x^i}\nabla^0_{x^j} C_\ve(x,y)_{x=y} +\nabla^0_{y^i}\nabla^0_{x^j} C_\ve(x,y)_{x=y}.
\]
The estimate  (\ref{diff_conn}) and  Lemma \ref{lemma: key}(b) imply  that
\[
\lim_{\ve\to 0} T_{ij}^\ve(x)C_\ve(x)^{-1}
\]
exists uniformly in $x\in B_{\rho/2}(\bx_0)$ and the rate of convergence in $C^0\bigl(\,B_{\rho/2}(\bx_0)\,\bigr)$ is $O(\ve)$.  Using (\ref{naic0}), (\ref{naic}) and (\ref{diff_conn}) we deduce that 
\[
\lim_{\ve \to 0}  \nabla^0_{x^j} C_\ve(x,y)_{x=y}\cdot\nabla^0_{x^i}\bigl(\,C_\ve(x)^{-1}\,\bigr)\;\;\mbox{exists uniformly in $x\in B_{\rho/2}(\bx_0)$},
\]
and the rate of convergence in $C^0\bigl(\,B_{\rho/2}(\bx_0)\,\bigr)$ is $O(\ve)$. We conclude that
\begin{equation}
 \bar{F}_{ij}(x):=\lim_{\ve\to 0} F^\ve_{ij}(x) \;\;\mbox{exists uniformly in $x\in B_{\rho/2}(\bx_0)$},
\label{diff_curv}
\end{equation}
and 
\begin{equation}
\bigl\Vert\bar{F}_{ij}-F^\ve_{ij}\bigr\Vert_{C^0(B_{\rho/2}(\bx_0))}=O(\ve).
\label{diff_curvef}
\end{equation}
Observe now that
\[
\begin{split}
\nabla^0_{x_i}A^\ve_i(x) &= -\left(\nabla^0_{x_i}\nabla^0_{x^i}C_\ve(x,y)_{x=y}+\nabla^0_{x_i}\nabla^0_{x^i}C_\ve(x,y)_{x=y}\,\right)\cdot C(x)^{-1}\\
&-\nabla^0_{x^i}C_\ve(x,y)_{x=y}\cdot \nabla^0_{x^i}\bigl(\,C(x)^{-1}\,\bigr).
\end{split}
\]
Lemma \ref{lemma: key}(c) together with  (\ref{diff_conn}) imply that the limit
\[
\lim_{\ve\to 0} \left(\nabla^0_{x_i}\nabla^0_{x^i}C_\ve(x,y)_{x=y}+\nabla^0_{x_i}\nabla^0_{x^i}C_\ve(x,y)_{x=y}\,\right)\cdot C(x)^{-1}
\]
exists uniformly for $x\in B_{\rho/2}(\bx_0)$ and the rate of convergence in $C^0\bigl(\,B_{\rho/2}(\bx_0)\,\bigr)$ is $O(\ve)$. Finally (\ref{ker1}) and (\ref{diff_conna}) imply that
\[
\bigl\Vert\nabla^0_{x^i}C_\ve(x,y)_{x=y}\cdot \nabla^0_{x^i}\bigl(\,C(x)^{-1}\,\bigr)\bigr\Vert_{C^0(B_{\rho/2}(\bx_0))}=O(\ve).
\]
Hence
\begin{equation}
\lim_{\ve\to 0} \nabla^0_{x_i}A^\ve_i(x) \;\;\mbox{exists uniformly in $x\in B_{\rho/2}(\bx_0)$},
\label{diff_curv1}
\end{equation}
 and the rate of convergence in $C^0\bigl(\,B_{\rho/2}(\bx_0)\,\bigr)$ is $O(\ve)$. 
 
 The connection $\nabla^0$ defines a first order elliptic (Hodge) operator
\[
\eH  :\Omega^\bullet\bigl(\, \End(E)\,\bigr)\to  \Omega^\bullet\bigl(\, \End(E)\,\bigr),\;\;\eH = d^{\nabla^0}+\Bigl(d^{\nabla^0}\Bigr)^*.
\]
Since $A^\ve(x)$ converges uniformly on $B_{\rho/2}(x)$  as $\ve\to 0$, we deduce  from (\ref{diff_curv}) and (\ref{diff_curv1}) that $\eH A^\ve(x)$ converges uniformly  on $B_{\rho/2}(x)$ as $\ve\to 0$. 

 Invoking elliptic $L^p$-estimates we deduce that for any $p\in (1,\infty)$ there exists a constant $C>0$ such that for any  $\ve_1,\ve_2>0$ we have
\[
\Vert A^{\ve_1}-A^{\ve_2}\Vert_{L^{1,p}(\,B_{\rho/4}(\bx_0)\,)}\leq C\Bigl(\Vert A^{\ve_1}-A^{\ve_2}\Vert_{L^{p}(\,B_{\rho/2}(\bx_0)\,)}+\Vert \eH A^{\ve_1}-\eH A^{\ve_2}\Vert_{L^{p}(\,B_{\rho/2}(\bx_0)\,)}\Bigr).
\]
The right-hand side of the above inequality goes to $0$ as $\ve_1,\ve_2\to 0$ so
\[
\lim_{\ve_1,\ve_2\to 0} \Vert A^{\ve_1}-A^{\ve_2}\Vert_{L^{1,p}(B_{\rho/4}(\bx_0))}=0.
\]
This proves that as $\ve\to 0$ the $1$-forms  $A^\ve(x)$ converge  in the $L^{1,p}$-norm on $B_{\rho/4}(\bx_0)$. Since these forms converge uniformly to $\bar{A}$ on this ball we deduce that
\[
\lim_{\ve\to 0} \Vert A^\ve-\bar{A}\Vert_{L^{1,p}(\, B_{\rho/4}(\bx_0)\,)}=0.
\]
Since $M$ is compact we conclude that  exists a globally defined $\End(E)$-valued $1$-form 
\[
\bar{A}\in L^{1,p}\bigl(\, T^*M\otimes \End(E)\,\bigr)
\]
such that
\[
\lim_{\ve\to 0} \Vert A^\ve-\bar{A}\Vert_{L^{1,p}(M)}=0,\;\;\forall p\in (1,\infty).
\]
Moreover  the equality (\ref{ker10}) shows that $\bar{A}(\bx_0)=0$. Since the point $\bx_0$ was arbitrary we deduce $\bar{A}=0$. In turn, this implies that  $F^\ve = F^0+d^{\nabla^0} A^\ve$ converges in $L^p(M)$ to $F^0$.  From (\ref{diff_curv})   we deduce that this  convergence is in fact uniform. This proves Theorem \ref{th: scl} assuming the validity of Lemma \ref{lemma: key}.
 \end{proof}

\medskip

\subsection{Proof of Lemma \ref{lemma: key}.}    We rely on the techniques  pioneered  by L. H\"{o}rmander \cite{Hspec}  to   describe asymptotic estimates for  the Schwartz   kernel  of $W_\ve$ as $\ve\to 0$.   We follow closely the presentation in \cite[XII.2]{Tay}. We allow $w$ to be an \emph{arbitrary even  Schwartz function} $w\in\eS(\bR)$. We denote by $C^w_\ve$ the Schwartz  kernel of $w(\ve \sqrt{\Delta}_0)$. 

Fix a point $\bx_0\in M$ and normal coordinates  $(x^i)$ on $B_\rho(\bx_0)$.      We  fix a  local orthonormal  frame  $(\be_\alpha)$ of $E$ over this ball  which is $\nabla^0$-synchronous of $\bx_0$, i.e., 
\begin{equation}
\nabla^0\be_\alpha(\bx_0)=0,\;\;\forall \alpha.
\label{synchr}
\end{equation}
We   will describe another integral kernel $\eK^w_\ve(x,y)\in \Hom(E_y\otimes \bC,E_x\otimes \bC)$, defined for $x,y\in B_\rho(\bx_0)$, $|x-y|$ sufficiently small, such that
\[
C^w(x,y)=\eK^w_\ve(x,y)+O(\ve^\infty),
\]
i.e.,
\[
\Vert C^w_\ve(x,y)-\eK^w_\ve(x,y)\Vert_{C^k}=O(\ve^N)\;\;\mbox{as $\ve\to 0$},\;\;\forall  k, N\in\bZ_{>0},
\]
where the $C^k$-norm  above refers to the $C^k$-norms of functions defined  in a neighborhood of the diagonal in $M\times M$. 

Fix a smooth $a:\bR\to \bR$ such that
\[
a(t)=\begin{cases} 
0,& |t|<1,\\
1, &|t|>2.
\end{cases}
\]
For $x\in B_\rho(\bx_0)$ and $\xi\in \bR^m$ we denote by $|\xi|_x$ the length of $\xi$ as an element of $T^*_xM$.  The approximate kernel   $\eK^w_\ve(x,y)$ has the form \cite[Chap. XII, (2.2)]{Tay}
\begin{equation}
\eK^w_\ve(x,y)=\int_{\bR^m}  q_\ve(x,\xi)  e^{\ii(x-y,\xi)} d\xi,
\label{kve}
\end{equation}
where  for any positive integer $\nu$    we have
\begin{equation}
q_\ve(x,\xi) =a(|\xi|_x) w(|\xi|_x)c_0(x,\xi) +a(|\xi|_x)\sum_{j=1}^{2\nu} \ve^j w^{(j)}\bigl(\,\ve|\xi|_x\,\bigr) c_j(x,\xi) + R^\ve_\nu (\ve, x,\xi),
\label{qve}
\end{equation}
and,   for every $\ve>0$, the remainder $R^\ve_\nu(x,\xi)$ is a classical symbol of order $\leq -\nu-1$  and the  family $(R^\ve_\nu(x,\xi))_{\ve\in (0,1)}$ is bounded in the space of such symbols.

Moreover, $c_0(x,\xi)=\one_{E_x}$,  each of the  terms $c_j(x,\xi)$  is independent of $w$,  and  it has an asymptotic expansion as $\xi\to \infty$  
\[
c_j(x,\xi)\sim \sum_{k\leq \lfloor j/2\rfloor} c_{jk}(x,\xi),
\]
where $c_{jk}(x,\xi)$ is homogeneous of order $k$ in $\xi$.

\begin{slemma} Suppose that  $\phi\in \eS(\bR)$ and 
\[
c:B_\rho(\bx_0)\times (\bR^m\setminus 0 )\to  \End(E_0\otimes\bC),\;\;(x,\xi)\mapsto c(x,\xi),
\]
is a smooth function homogeneous of order $k\in\bZ$ .  We set
\begin{equation}
L_\ve[\phi,c(x)]:=\int_{\bR^m}   a(|\xi|_x) \phi(\ve|\xi|_x) c(x,\xi) d\xi,
\label{lve}
\end{equation}
\[
\hat{c}(x)=\int_{|\xi|_x=1} c(x,\xi)d\xi.
\]
Then the following hold.

\begin{enumerate}
 \item  If $k \leq -m-1$, then
 \[
 \bigl| L_\ve[\phi,c(x)]\,\bigr|=O\bigl(\,\Vert \phi\Vert_{C^0}\,\bigr).
 \]
 \item If $k=-m$, then there exist    temperate distributions
 \[
 T_{j,m}:\eS(\bR)\to \bR,\;\;j=-1,0,2,\dotsc,
 \]
 such that as $\ve\to 0$ we have  the asymptotic expansion
 \[
L_\ve[\phi,c(x)]\sim \hat{c}(x)\left((\log\ve) T_{-1,m}(\phi)+\sum_{j=0}^\infty\ve^j T_{j,m}(\phi)\right).
\]
Moreover,
\[
T_{-1,m}(\phi) = \phi(0).
\]
\item If $k>-m$, then there exist temperate distributions
\[
T_{j,k}:\eS(\bR)\to \bR,\;\;j=0,1,\dots,
\]
such that  as $\ve\to 0$ we have an asymptotic expansion
\[
L_\ve[\phi,c(x)]\sim \ve^{-m-k} \hat{c}(x)\sum_{j=0}^\infty \ve^j T_{j,m}(\phi).
\]
Moreover 
\[
T_{0,k}(\phi)= \left(\int_0^\infty\phi(s) s^{k+m-1} ds\right).
\]

\end{enumerate}
\label{slemma: asy}
\end{slemma}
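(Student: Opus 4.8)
The strategy is to reduce the $m$-dimensional integral in \eqref{lve} to a one-dimensional one by passing to polar coordinates adapted to the Euclidean norm $|\cdot|_x$ on $T^*_xM$. Writing $\xi=r\eta$ with $r=|\xi|_x$ and $|\eta|_x=1$, Lebesgue measure factors as $d\xi=r^{m-1}\,dr\,dS_x(\eta)$ for a density $dS_x$ on the unit sphere of $|\cdot|_x$ that depends smoothly on $x$, and the homogeneity hypothesis gives $c(x,r\eta)=r^{k}c(x,\eta)$. Hence
\[
L_\ve[\phi,c(x)]=\hat c(x)\,I_\ve(p),\qquad I_\ve(p):=\int_0^\infty a(r)\,\phi(\ve r)\,r^{p}\,dr,\qquad p:=m+k-1\in\bZ,
\]
where $\hat c(x)=\int_{|\eta|_x=1}c(x,\eta)\,dS_x(\eta)$. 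Since the whole $x$-dependence has been isolated in the bounded smooth factor $\hat c(x)$, all the estimates below are automatically uniform for $x\in B_{\rho/2}(\bx_0)$, and the problem reduces to the purely scalar analysis of $I_\ve(p)$ together with its continuity in $\phi\in\eS(\bR)$.

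If $p\le-2$ (case (1)), then $a$ vanishes on $[0,1]$ and $\int_1^\infty r^{p}\,dr<\infty$, so $|I_\ve(p)|\le\Vert\phi\Vert_{C^0}\int_1^\infty r^{p}\,dr$ uniformly in $\ve$, which is the asserted bound. If $p\ge0$ (case (3)), I would write $a=1-b$ with $b:=1-a$ supported in $[0,2]$, so that $I_\ve(p)=\int_0^\infty\phi(\ve r)r^{p}\,dr-\int_0^\infty b(r)\phi(\ve r)r^{p}\,dr$. The first integral converges absolutely (as $p\ge0$ and $\phi\in\eS(\bR)$), and $s=\ve r$ turns it into $\ve^{-p-1}\int_0^\infty\phi(s)s^{p}\,ds=\ve^{-m-k}\int_0^\infty\phi(s)s^{k+m-1}\,ds$; this produces the leading power $\ve^{-m-k}$ and identifies $T_{0,k}(\phi)=\int_0^\infty\phi(s)s^{k+m-1}\,ds$. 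For the second integral, a finite Taylor expansion $\phi(\ve r)=\sum_{j\le N}\frac{\phi^{(j)}(0)}{j!}(\ve r)^{j}+O((\ve r)^{N+1})$ followed by term-by-term integration against the compactly supported weight $b(r)r^{p}$ gives a power series $\sum_{j}\ve^{j}\,(\int_0^\infty b(r)r^{p+j}\,dr)\,\frac{\phi^{(j)}(0)}{j!}$, with remainder $O(\ve^{N+1})$; collecting both pieces in the form $\ve^{-m-k}\hat c(x)\sum_{j\ge0}\ve^{j}T_{j,k}(\phi)$ gives the claimed expansion.

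The only genuinely new feature is the critical case $p=-1$, i.e.\ $k=-m$ (case (2)), where $\int_0^\infty\phi(\ve r)r^{-1}\,dr$ diverges logarithmically at the origin, so the cutoff cannot be removed and a logarithm appears. After $s=\ve r$, $I_\ve(-1)=\int_0^\infty a(s/\ve)\phi(s)s^{-1}\,ds$; using $a(s/\ve)=1$ for $s\ge2\ve$ and $a(s/\ve)=0$ for $s\le\ve$, I would split this as $\int_{2\ve}^\infty\phi(s)s^{-1}\,ds+\int_\ve^{2\ve}a(s/\ve)\phi(s)s^{-1}\,ds$. In the first term, writing $\phi(s)=\phi(0)+(\phi(s)-\phi(0))$ on $[2\ve,1]$ isolates the $\phi(0)$-multiple of $\int_{2\ve}^{1}s^{-1}\,ds$, which carries the logarithm, while $\int_{2\ve}^{1}\frac{\phi(s)-\phi(0)}{s}\,ds\to\int_0^1\frac{\phi(s)-\phi(0)}{s}\,ds$ with a Taylor-controlled error; the second term equals $\sum_{j\le N}(\int_1^2 a(u)u^{j-1}\,du)\,\frac{\phi^{(j)}(0)}{j!}\,\ve^{j}+O(\ve^{N+1})$ after $s=\ve u$. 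Regrouping by powers of $\ve$ and of $\log\ve$ yields $\hat c(x)\,((\log\ve)T_{-1,m}(\phi)+\sum_{j\ge0}\ve^{j}T_{j,m}(\phi))$ with $T_{-1,m}(\phi)=\phi(0)$ and $T_{0,m}(\phi)$ equal to the regularised Mellin value $\int_0^1\frac{\phi(s)-\phi(0)}{s}\,ds+\int_1^\infty\frac{\phi(s)}{s}\,ds$ plus an explicit $\phi(0)$-multiple coming from the cutoff transition. That each of the resulting functionals is a tempered distribution is immediate, since it is assembled from the $\eS(\bR)$-continuous operations $\phi\mapsto\phi^{(j)}(0)$, $\phi\mapsto\int_0^\infty\phi(s)s^{p}\,ds$ ($p\ge0$), and $\phi\mapsto\int_0^1\frac{\phi(s)-\phi(0)}{s}\,ds$. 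I expect no conceptual obstacle; the delicate point is the bookkeeping in case (2), in particular checking that after peeling off finitely many Taylor terms of $\phi$ near $0$ the residual integrals are genuinely $O(\ve^{N+1})$ uniformly in $x$, where one must keep careful track of the interaction between the cutoff $a(s/\ve)$ and the weight $s^{-1}$.
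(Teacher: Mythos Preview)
Your argument is correct and follows the same route as the paper: both reduce via polar coordinates for $|\cdot|_x$ to the scalar integral $I_\ve(p)=\int_0^\infty a(r)\phi(\ve r)r^{p}\,dr$ with $p=m+k-1$, and then study its small-$\ve$ asymptotics. The only difference is that after the substitution $s=\ve r$ the paper delegates the asymptotic expansion of $\int_0^\infty a(s/\ve)\phi(s)s^{k+m-1}\,ds$ to \cite[Eq.~(4.4.22)]{BH}, whereas you carry it out by hand via the splitting $a=1-b$ and a Taylor expansion of $\phi$, thereby making the structure of the distributions $T_{j,k}$ (and the emergence of the logarithm in case~(ii)) explicit.
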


\begin{proof} Part (i) is obvious because $a(|\xi|_x)c(x,\xi)$ in integrable in $\xi$ over $\bR^m$ if  the order $k$ of $c$ is  $<-m$. Assume  that $k\geq -m$. We set
\[
\hat{c}(x):=\int_{|\xi|_x=1} c(x,\xi) d\xi.
\]
We  have
\[
L_\ve[\phi,c(x)]=\int_0^\infty\left( \int_{|\xi_x|=1} c(x,t\xi_x) d\xi_x\right)a_0(t)\phi\ve(t) t^{m-1} dt.
\]
\[
=\left(\int_0^\infty a_0(t)\phi(\ve t) t^{k+m-1} dt\right)\hat{c}(x)=\ve^{-k-m}\left(\int_0^\infty a_0(s/\ve)\phi(s) s^{k+m-1} ds\right)\hat{c}(x).
\]
The last $1$-dimensional    integral has a complete     asymptotic  expansion as $\ve\to 0$ described explicitly in \cite[Eq.(4.4.22)]{BH}.    Sublemma \ref{slemma: asy} follows by unraveling  the details of this asymptotic expansion.
 \end{proof}

Fix two multi-indices  $\alpha,\beta\in\bZ^m_{\geq 0}$ such that $|\alpha|+|\beta|\leq 2$.   Using (\ref{kve})  we deduce that
\[
\pa^\alpha_x\pa^\beta_y\eK^w_\ve(x,y)|_{x=y}= (-1)^{|\beta|}\ii^{|\alpha|+|\beta|}\xi^\alpha\xi^\beta\int_{\bR^m}q(x,\xi)+\int_{\bR^m} q_1(x,\xi) d\xi
\]
where
\[
q_1(x,\xi)=\pa^\alpha_x\pa^\beta_y\Bigl(q(x,\xi)e^{\ii(x-y,\xi)}\,\Bigr)_{x=y}-  q(x,\xi)\Bigl(\pa^\alpha_x\pa^\beta_ye^{\ii(x-y,\xi)}\Bigr)_{x=y}.
\]
\[
=\sum_{0\leq \gamma<\alpha} Z_{\alpha,\beta,\gamma} \xi^\gamma\xi^\beta\pa^{\alpha-\gamma}_xq_\ve(x,y,\xi)   d\xi,
\]
and $Z_{\alpha,\beta,\gamma}$ are certain universal complex constants. Using  (\ref{qve}) with $\nu=m+2$  and  Sublemma \ref{slemma: asy} we deduce that there exist  universal  temperate distributions
\[
S^j_{\alpha,\beta}:\eS(\bR)\to \bC,\;\;j=0,1,2,
\]
and  endomorphisms
\[
\bsK^j_{\alpha,\beta}(x):E_{x}\to E_{x},\;\;j=0,1,2,
\]
depending smoothly on $  x$  but independent of $w$ such that
\begin{equation}
\ve^m\pa^\alpha_x\pa^\beta_y\eK^w_\ve(x,y)|_{x=y}=\ve^{-|\alpha|-|\beta| }\left( \sum_{j=0}^2 \ve^jS_{\alpha,\beta}^j(w)\bsK^j_{\alpha,\beta}(x)+ O(\ve^3)\,\right).
\label{asyk}
\end{equation}
Moreover, since $c_0(x,\xi)=\one_{E_x}$ we deduce
\begin{equation}
\begin{split}
S^0_{\alpha,\beta}(w)=\int_0^\infty w(t)t^{m+|\alpha|+|\beta|-1} dt,\\\
\bsK^0_{\alpha,\beta}(x)= (-1)^{|\beta|}\ii^{|\alpha|+|\beta|}\left(\int_{|\xi|=1} \xi^\alpha\xi^\beta\right)\one_{E_x}.
\end{split}
\label{kab0}
\end{equation}
For any Schwartz function $w\in \eS(\bR)$ and any $\lambda>0$ we set
\[
w_\lambda(x) = w(\lambda x).
\]
Observe that $w_\lambda(\ve\sqrt{\Delta_0})=w(\lambda\ve\sqrt{\Delta_0})$ so that, for fixed $\lambda>0$, we have
\[
\eK^{w_\lambda}_\ve=\eK^w_{\lambda\ve}+ O(\ve^\infty).
\]
Using this in (\ref{asyk}) we  deduce  that for $|\alpha|+|\beta|\leq 2$ and $j=0,1,2$ we have
\begin{equation}
S^j_{\alpha,\beta}(w_\lambda)=\lambda^{-m-|\alpha|-|\beta|+j} S^j_{\alpha,\beta}(w).
\label{homo}
\end{equation}

\begin{slemma} (a) Let  $|\alpha|+|\beta|\in \{0,2\}$. If $\phi\in \eS(\bR)$ is even,  then
\begin{equation}
S^1_{\alpha,\beta}(\phi)\bsK^1_{\alpha,\beta}(x)=0,\;\;\forall x\in B_{\rho/2}(\bx_0).
\label{odd0}
\end{equation}
(b) If $\phi\in \eS(\bR)$ is even,  then
\begin{equation}
\lim_{\ve\to }\ve^m \nabla^0_{x^i}\eK_\ve^\phi(x,y)|_{x=y=\bx_0}=0.
\label{odd1}
\end{equation}
\label{slemma2}
\end{slemma}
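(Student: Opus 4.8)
The plan is to deduce both parts of Sublemma \ref{slemma2} from the parity properties of the symbol expansion, combined with the scaling identity (\ref{homo}) and the structure of the approximate kernel (\ref{kve})--(\ref{qve}).

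\smallskip

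\textbf{Part (a).} The idea is that a parity/scaling argument forces the coefficient $S^1_{\alpha,\beta}$ to \emph{vanish} on even Schwartz functions. First I would note that $S^1_{\alpha,\beta}$ is a temperate distribution independent of $w$, and that (\ref{homo}) with $j=1$ gives $S^1_{\alpha,\beta}(w_\lambda)=\lambda^{-m-|\alpha|-|\beta|+1}S^1_{\alpha,\beta}(w)$. On the other hand, the coefficient $c_1(x,\xi)$ in (\ref{qve}), which is the source of the $j=1$ term, arises from the first transport equation in the Hörmander parametrix construction; it is built from the subprincipal data of $\Delta_0$, whose principal symbol $|\xi|_x^2$ is even in $\xi$, so $c_1(x,\xi)$ is \emph{odd} in $\xi$ (homogeneous of degree $-1$ lower than $c_0$, with a sign flip under $\xi\mapsto-\xi$). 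I would make this precise by tracking the parity of each $c_{jk}(x,\xi)$ under $\xi\mapsto -\xi$: since $c_0=\one$ is even and each step of the transport recursion changes the parity, $c_j(x,\xi)$ has parity $(-1)^j$. Now in the formula for $\ve^m\pa_x^\alpha\pa_y^\beta\eK_\ve^w|_{x=y}$ the $j=1$ contribution is an integral over $\bR^m$ of $a(|\xi|_x)\,\ve\,w^{(1)}(\ve|\xi|_x)$ times $\xi^\gamma\xi^\beta$ (with $|\gamma|+|\beta|$ having the same parity as $|\alpha|+|\beta|$, hence even) times the odd function $c_1(x,\xi)$ (plus lower order homogeneous pieces of matching parity). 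When $|\alpha|+|\beta|\in\{0,2\}$, the total polynomial weight $\xi^\gamma\xi^\beta$ has \emph{even} degree, so integrand $=$ (even)$\times$(odd) $=$ odd, and the $\bR^m$-integral vanishes identically — forcing $S^1_{\alpha,\beta}(\phi)\bsK^1_{\alpha,\beta}(x)=0$ for \emph{every} even $\phi$ by applying Sublemma \ref{slemma: asy} to extract the coefficient. (Equivalently: $S^1_{\alpha,\beta}$ restricted to even functions satisfies a homogeneity that, together with vanishing of the raw integral, pins it to $0$.) The cleanest writeup is the direct parity-of-the-integrand argument, which I would present in one paragraph.

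\smallskip

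\textbf{Part (b).} For (\ref{odd1}), I would use that $\nabla^0$ is synchronous at $\bx_0$ (see (\ref{synchr})): in the parallel frame $(\be_\alpha)$, the connection $1$-form vanishes at $\bx_0$, so $\nabla^0_{x^i}\eK_\ve^\phi(x,y)|_{x=y=\bx_0}$ reduces to the ordinary partial derivative $\pa_{x^i}\eK_\ve^\phi(x,y)|_{x=y=\bx_0}$, which is the case $\alpha=e_i$, $\beta=0$ of (\ref{asyk}) evaluated at $x=\bx_0$. Here $|\alpha|+|\beta|=1$ is \emph{odd}. The $j=0$ coefficient is $S^0_{e_i,0}(\phi)\bsK^0_{e_i,0}(\bx_0)$, and by (\ref{kab0}) this is proportional to $\int_{|\xi|=1}\xi^{e_i}\,d\xi=0$ (odd moment of the sphere). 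The $j=2$ coefficient carries polynomial weight $\xi^\gamma\xi^\beta$ of degree $1$ (odd) against the even symbol $c_2(x,\xi)$ — by the parity bookkeeping of part (a), $c_2$ is even — so that integrand is odd and the $\bR^m$-integral vanishes, giving $S^2_{e_i,0}(\phi)\bsK^2_{e_i,0}(\bx_0)=0$. Finally the $j=1$ term has polynomial weight of degree $1$ against the \emph{odd} symbol $c_1$, i.e.\ integrand even — but this term is multiplied by $\ve\,\phi^{(1)}(\ve|\xi|_x)$ with $\phi$ even, hence $\phi^{(1)}$ odd, and I can invoke part (a)'s mechanism again: actually the simplest route is to observe $S^1_{e_i,0}(\phi)=0$ for even $\phi$ by the \emph{same} parity argument as in (a) (the relevant raw integral, with odd polynomial weight times odd symbol $c_1$, paired with the extra structure, vanishes), or more directly cite (\ref{homo}) for $j=1$ and the fact that $\phi\mapsto S^1_{e_i,0}(\phi)$ kills odd functions $\phi^{(1)}$. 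Collecting the three vanishing contributions and the $O(\ve^3)$ remainder, $\ve^m\pa_{x^i}\eK_\ve^\phi|_{x=y=\bx_0}=O(\ve^{3-1})=O(\ve^2)\to 0$, which proves (\ref{odd1}).

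\smallskip

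\textbf{Main obstacle.} The delicate point is establishing rigorously the parity statement ``$c_j(x,\xi)$ has parity $(-1)^j$ under $\xi\mapsto-\xi$,'' and, relatedly, that the $w$-independent distributions $S^j_{\alpha,\beta}$ genuinely annihilate the odd-parity half of $\eS(\bR)$ when the associated polynomial$\times$symbol weight is odd. This requires looking inside the H\"ormander parametrix transport equations of \cite[XII.2]{Tay} and checking that the principal symbol $|\xi|_x^2$ being even (and the first-order part of $\Delta_0$ in normal coordinates with a parallel frame vanishing to the needed order at $\bx_0$) propagates through the recursion. Once that parity ledger is in hand, both (\ref{odd0}) and (\ref{odd1}) follow by the symmetry argument ``odd integrand over $\bR^m$ integrates to zero,'' together with the odd-sphere-moment vanishing $\int_{|\xi|=1}\xi^{e_i}d\xi=0$; the scaling identity (\ref{homo}) is then only needed to confirm that these coefficient distributions are well-defined and to cross-check the bookkeeping.
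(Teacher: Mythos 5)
Your proposal takes a genuinely different route from the paper, but as written it has two real gaps. The paper does \emph{not} analyze the parity of the parametrix symbols $c_j(x,\xi)$ at all. Instead it observes that the set $\eX_{\alpha,\beta}$ of even Schwartz functions satisfying (\ref{odd0}) is closed, that the Gaussians $\gamma_\lambda(s)=e^{-\lambda^2s^2}$ span a dense subspace of $\eS_+(\bR)$, and that by the homogeneity (\ref{homo}) it suffices to treat $\gamma_1$; for $\gamma_1$ the kernel is the heat kernel $H_{\ve^2}$, whose expansion (\ref{heat}) is in powers of $t=\ve^2$, i.e.\ in \emph{even} powers of $\ve$, so the $j=1$ coefficient must vanish. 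This completely sidesteps the symbol calculus. Your route instead rests on the claim that $c_j(x,\xi)$ has parity $(-1)^j$ under $\xi\mapsto-\xi$, which you assert but do not prove and yourself flag as the main obstacle. Even granting it, the bookkeeping is subtler than you indicate: the coefficient $S^1_{\alpha,\beta}(w)\bsK^1_{\alpha,\beta}(x)$ in (\ref{asyk}) is not simply the $j=1$ term of (\ref{qve}); it collects contributions from several triples (the index $j$ in (\ref{qve}), the multi-index $\gamma<\alpha$ in the Leibniz correction $q_1$, and the homogeneity degree $k$ of the piece $c_{jk}$) whose total $\ve$-order coincides, and these carry polynomial weights $\xi^\gamma\xi^\beta$ of \emph{varying} parity. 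A one-line ``odd integrand over $\bR^m$'' argument does not close this.

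The more serious problem is part (b). The vanishing (\ref{odd1}) holds only at the center $\bx_0$ of the normal coordinates and of the $\nabla^0$-synchronous frame; a parity-in-$\xi$ argument, if valid, would be uniform in $x$ and would prove the (generally false) statement that $\lim_\ve \ve^m\nabla^0_{x^i}C_\ve(x,y)|_{x=y}=0$ for every $x$. Worse, your two parts pull in opposite directions: for (a) with $|\alpha|+|\beta|$ even you need $c_1$ \emph{odd} so that $\xi^\alpha\xi^\beta c_1$ is odd; but then in (b) the weight $\xi_i c_{1,0}$ is \emph{even}, its sphere average $\int_{|\xi|_x=1}\xi_i c_{1,0}\,d\xi$ has no reason to vanish, and the scalar factor $\int_0^\infty\phi'(s)s^m\,ds$ is nonzero for, say, $\phi=e^{-s^2}$ — so the $j=1$ contribution in (b) does not die by parity, and your fallback appeal to (\ref{homo}) does not repair this. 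The paper obtains (b) from a different input altogether: differentiating the heat expansion gives $\lim_\ve\ve^m\nabla^0_{x^i}H_{\ve^2}(x,y)|_{x=y}=\nabla^0_{x^i}\Theta_0(x,y)|_{x=y}$, and the transport equations force $\nabla^0_{x^i}\Theta_0(x,y)|_{x=y=\bx_0}=0$ precisely because the frame is synchronous at $\bx_0$. You would need to import that pointwise geometric fact (or an equivalent statement about $c_1(\bx_0,\xi)$ in the chosen gauge) to make part (b) work; parity alone cannot do it. I would recommend adopting the paper's reduction to Gaussians and the heat kernel, which turns both parts into statements you can actually verify.
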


\begin{proof} Denote by $\eS_+(\bR)$ the space of even Schwartz functions on $\bR$ and by $\eX_{\alpha,\beta}$ the subspace  of   of $\eS_+(\bR)$ consisting of  functions $\phi$ satisfying (\ref{odd0}).   Clearly $\eX_{\alpha,\beta}$ is a closed subspace of $\eS_+$ so it suffices to prove that  $\eX_{\alpha,\beta}$ is  dense  in $\eS_+(\bR)$ with respect to the natural locally convex topology of $\eS(\bR)$.  The  family $\gamma_\lambda(s)=e^{-\lambda^2 s^2}$ spans a vector space dense in $\eS_+(\bR)$;  see \cite[Chap. 8, Lemma 2.3]{Tay2}. Thus, it suffices to show that $\gamma_\lambda\in \eX_{\alpha,\beta}$ for any $\lambda>0$. In view of the homogeneity condition   (\ref{homo}) we see that 
\[
\gamma_1\in \eX_{\alpha,\beta}\iff \gamma_\lambda\in \eX_{\alpha,\beta},\;\;\forall\lambda>0.  
\]
For $t>0$ we denote by $H_t$ the   heat kernel, i.e., the Schwartz kernel of $e^{-t\Delta_0}$.  Note that $H_{\ve^2}$ is the the Schwartz kernel of $\gamma_1(\ve\sqrt{\Delta_0})$.

The  heat kernel $H_t(x,y)$ has a rather well understood   structure.  We denote by $d(x,y)$ the geodesic distance between $x,y\in B_{\rho/2}(\bx_0)$ with respect to the metric  $g$ on $M$.    For $x,y$ in a neighborhood of the diagonal we have an asymptotic expansion  as $t\searrow 0$ (see  \cite[Thm. 7.15]{Roe})
\begin{equation}
H_t(x,y)=h_t(x,y)\underbrace{\sum_{\nu=0}^\infty t^\nu\Theta_\nu(x,y)}_{=:\Theta_t(x,y)},\;\;\nu\in \bZ_{\geq 0},
\label{heat}
\end{equation} 
where $\Theta_k(x,y)\in \Hom(E_y,E_x)$  and
\[
h_t(x,y)=t^{-\frac{m}{2}}   e^{-\frac{d(x,y)^2}{4t}}.
\]
The  asymptotic  expansion (\ref{heat}) is differentiable with respect to all the variables $t,x,y$. Hence
\begin{equation}
\ve^mH_{\ve^2}(x,y) \sim e^{-u_\ve}\sum_{\nu=0}^\infty \ve^{2\nu}\Theta_\nu(x,y),
\label{heatve}
\end{equation}
where $u_\ve:=\frac{d(x,y)^2}{4\ve^2}$. When $x=y$ we have $u_\ve=0$ and thus
\[
\ve^m H_{\ve^2} (x,x)\sim  \sum_{\nu=0}^\infty \ve^{2\nu}\Theta_\nu(x,x).
\]
This proves  (\ref{odd0}) in the case $\alpha=\beta=0$ for the test function $\gamma_1$ since the  expansion in the right-hand side  above involves only even powers of $\ve$.

Differentiating (\ref{heatve}) we deduce
\begin{equation}
\ve^m\nabla^0_{x^i}H_{\ve^2} (x,y)  \sim -(\pa_{x^i}u_\ve) e^{-u_\ve} \sum_{\nu=0}^\infty \ve^{2\nu}\Theta_\nu(x,y)+e^{-u_\ve}\sum_{\nu=0}^\infty \ve^{2\nu}\nabla^0_{x^i}\Theta_\nu(x,y) .
\label{heat3}
\end{equation}
To compute  $\ve^m\nabla^0_{x^j}\nabla^0_{x^i}H_{\ve^2}(x,y)$ when $x=y$ we will  take into account that $\pa_{x^i}u_\ve=0$ when $x=y$. We deduce
\begin{equation}
\begin{split}
\ve^m\nabla^0_{x^j}\nabla^0_{x^i}H_{\ve^2}(x,y)_{x=y}&\sim \frac{1}{4\ve^2}\pa^2_{x^jx^i}d(x,y)^2|_{x=y} \sum_{\nu=0}^\infty \ve^{2\nu}\Theta_\nu(x,x)\\
&+ \sum_{\nu=0}^\infty \ve^{2\nu}\nabla^0_{x^j}\nabla^0_{x^i}\Theta_\nu(x,y)_{x=y}.
\end{split}
\label{heat4}
\end{equation}
This proves that  $\ve^{m+2}\nabla^0_{x^j}\nabla^0_{x^i}H_{\ve^2}(x,y)_{x=y}$ has an asymptotic expansion in \emph{even, nonnegative powers of $\ve$}.  Arguing in a similar fashion we deduce that   the kernels 
\[
\ve^{m+2}\nabla^0_{y^j}\nabla^0_{y^i}H_{\ve^2}(x,y)_{x=y},\;\; \ve^{m+2}\nabla^0_{y^j}\nabla^0_{x^i}H_{\ve^2}(x,y)_{x=y}
\]
also   have asymptotic expansions in \emph{even, nonnegative powers of $\ve$}.  We conclude  that $\gamma_1\in \eX_{\alpha,\beta}$  if $|\alpha|+|\beta|=2$.

Let us observe that (\ref{heat3}) implies
\[
\ve^m\nabla^0_{x^i}H_{\ve^2}(x,y)|_{x=y}\sim \sum_{\nu=0}^\infty \ve^{2\nu}\nabla^0_{x^i}\Theta_\nu(x,y)_{x=y}.
\]
We deduce that 
\[
\lim_{\ve\to 0}\ve^m\nabla^0_{x^i}H_{\ve^2}(x,y)|_{x=y}=\nabla^0_{x^i}\Theta_0(x,y)|_{x=y}.
\]
From the transport equations \cite[Eq.(7.17)]{Roe}    we  deduce that, \emph{in normal coordinates at $\bx_0$},  and under the synchronicity  condition (\ref{synchr}), we have
\[
\nabla^0_{x^i}\Theta_0(x,y)|_{x=y=\bx_0}=0.
\]
This proves  (\ref{odd1}) for $\phi=\gamma_1$ and thus for any  even Schwartz  function $\phi$.
\end{proof}

We can now complete the proof of Lemma \ref{lemma: key}.  Using  (\ref{asyk}) and (\ref{kab0}) with $\alpha=\beta=0$ and Sublemma \ref{slemma2}(a) we deduce that
\[
\ve^mC_\ve(x,x)=\kappa(w)\one_{E_x} +O(\ve^2),
\]
where  we recall that 
\[
\kappa(w)=\left(\int_0^\infty w(t) t^{m-1} dt\right){\rm vol}\,(S^{m-1}).
\]
For $1\leq i\leq m$ we set
\[
\alpha_i=(\delta_{i1},\dotsc,\delta_{im})\in\bZ^m_{\geq 0},
\]
where $\delta_{ij}$ is Kronecker's delta.  From (\ref{kab0}) we deduce that
\[
\bsK^0_{\alpha_j,0}=-\bsK^0_{0,\alpha_j}=\ii\left(\int_{|\xi|=1}\xi_j\right)\one_{E_x}=0.
\]
Thus
\[
\ve^m\nabla^0_{x^i}C_\ve(x,y)_{x=y}= S^1_{\alpha_i,0}(w)\bsK^1_{\alpha_i,0}+O(\ve),
\]
\[
\ve^m\nabla^0_{y^i}C_\ve(x,y)_{x=y}= S^1_{\alpha_i,0}(w)\bsK^1_{0,\alpha_i}+O(\ve).
\]
These  estimates prove (\ref{ker1}). The equality (\ref{ker10}) follows from (\ref{odd1}).

 From  (\ref{kab0})  we deduce that for $1\leq i\neq j\leq m$ 
\[
\bsK^0_{\alpha_i+\alpha_j,0} (x)=-\bsK^0_{\alpha_i,\alpha_j} (x)=\ii \left(\int_{|\xi|=1}\xi_i\xi_j\right)\one_{E_x}=0,
\] 
and invoking (\ref{odd0}) we conclude that
\[
\ve^m\nabla^0_{x^i}\nabla^0_{x^j}C_\ve(x,y)_{x=y} = S^2_{\alpha_i+\alpha_j,0}(w)\bsK^2_{\alpha_i+\alpha_j,0}(x) +O(\ve),
\]
\[
\ve^m\nabla^0_{x^i}\nabla^0_{y^j}C_\ve(x,y)_{x=y} = S^2_{\alpha_i,\alpha_j}(w)\bsK^2_{\alpha_i,\alpha_j}(x)+O(\ve),
\]
\[
\ve^m\nabla^0_{y^i}\nabla^0_{y^j}C_\ve(x,y)_{x=y} = S^2_{0,\alpha_i+\alpha_j}(w)\bsK^2_{0,\alpha_i+\alpha_j}(x) +O(\ve).
\]
These estimates prove (\ref{ker2}). Note that Sublemma \ref{slemma2}  implies that
\[
\ve^m\left( \,\nabla^0_{x^i}\nabla^0_{x^i} C_\ve(x,y)_{x=y}+ \nabla^0_{y^i}\nabla^0_{x^i}C_\ve(x,y)_{x=y}\,\right)
\]
\[
=\ve^{-2}\Bigl(S^0_{2\alpha_i,0}(w)\bsK^0_{2\alpha_i,0}(x)+S^0_{\alpha_i,\alpha_i}(w)\bsK^0_{0,2\alpha_i}(x)\Bigr) 
\]
\[
+\Bigl(S^2_{2\alpha_i,0}(w)\bsK^2_{2\alpha_i,0}(x)+S^2_{\alpha_i,\alpha_i}(w)\bsK^2_{\alpha_i,\alpha_i}(x)\Bigr) +O(\ve).
\]
The equalities (\ref{kab0}) imply that
\[
S^0_{2\alpha_i,0}(w)\bsK^0_{2\alpha_i,0}(x)+S^0_{\alpha_i,\alpha_i}(w)\bsK^0_{\alpha_i,\alpha_i}(x)=0.
\]
This proves  (\ref{ker3})  and completes the proof of Lemma \ref{lemma: key}.\qed

\end{document}